\def\B'c{{\mathcal{B'}}}
\def\U'c{{\mathcal{U'}}}
\def\opn#1#2{\def#1{\operatorname{#2}}} 
\opn\chara{char}
\opn\length{\ell}
\opn\projdim{proj\,dim}
\opn\injdim{inj\,dim}
\opn\ini{in}
\opn\rank{rank}
\opn\depth{depth}
\opn\sdepth{sdepth}
\opn\height{ht}
\opn\embdim{emb\,dim}
\opn\codim{codim}
\opn\Tr{Tr}
\opn\bigrank{big\,rank}
\opn\superheight{superheight}\opn\lcm{lcm}
\opn\trdeg{tr\,deg}%
\opn\reg{reg}
\opn\lreg{lreg}
\opn\set{set}
\opn\supp{Supp}
\opn\shad{Shad}
\opn\div{div}
\opn\Div{Div}
\opn\cl{cl}
\opn\Cl{Cl}
\opn\Spec{Spec}
\opn\Supp{Supp}
\opn\supp{supp}
\opn\Sing{Sing}
\opn\Ass{Ass}
\opn\Min{Min}
\opn\size{size}
\opn\bigsize{bigsize}
\opn\lex{lex}
\opn\Ann{Ann}
\opn\Rad{Rad}
\opn\Soc{Soc}
\opn\Ker{Ker}
\opn\Coker{Coker}
\opn\Im{Im}
\opn\Hom{Hom}
\opn\Tor{Tor}
\opn\Ext{Ext}
\opn\End{End}
\opn\Aut{Aut}
\opn\id{id}
\opn\nat{nat}
\opn\GL{GL}
\opn\SL{SL}
\opn\mod{mod}
\opn\ord{ord}
\opn\aff{aff}
\opn\con{conv}
\opn\relint{relint}
\opn\st{st}
\opn\lk{lk}
\opn\cn{cn}
\opn\core{core}
\opn\vol{vol}
\opn\gr{gr}
\def\pot#1#2{#1[\kern-0.28ex[#2]\kern-0.28ex]}
\opn\dirlim{\underrightarrow{\lim}}
\opn\invlim{\underleftarrow{\lim}}
\let\union=\cup
\def\pnt{{\raise0.5mm\hbox{\large\bf.}}}
\let\to=\rightarrow
\def\Implies{\ifmmode\Longrightarrow \else
     \unskip${}\Longrightarrow{}$\ignorespaces\fi}
\def\implies{\ifmmode\Rightarrow \else
     \unskip${}\Rightarrow{}$\ignorespaces\fi}
\def\iff{\ifmmode\Longleftrightarrow \else
     \unskip${}\Longleftrightarrow{}$\ignorespaces\fi}
\newtheorem{Theorem}{Theorem}[section]
\newtheorem{Lemma}[Theorem]{Lemma}
\newtheorem{Corollary}[Theorem]{Corollary}
\newtheorem{Proposition}[Theorem]{Proposition}
\newtheorem{Definition}[Theorem]{Definition}
\let\epsilon=\varepsilon
\let\phi=\varphi
\let\kappa=\varkappa
\numberwithin{equation}{section}
\title{Lexsegment ideals are sequentially Cohen-Macaulay}
\author[Muhammad Ishaq]{Muhammad Ishaq}
\address{Muhammad Ishaq, Abdus Salam School of Mathematical Sciences, GC
University, Lahore, 68-B New Muslim town Lahore, Pakistan.}
\email{ishaq$\_\,$maths@yahoo.com}
\thanks{The author would like to express his gratitude to ASSMS of GC University Lahore and  Ovidius University of Constan\c{t}a (Romania) for creating a very appropriate atmosphere for research work. This research is partially supported by HEC Pakistan.}
\begin{document}
\maketitle
\begin{abstract}
The associated primes of an arbitrary lexsegment ideal $I\subseteq S=K[x_1,\ldots,x_n]$ are determined. As
application it is shown that $S/I$ is a pretty clean module, therefore, $S/I$ is sequentially Cohen-Macaulay and
satisfies Stanley's conjecture.\\\\
\textbf{Mathematics Subject Classification (2000).} 13C15, 13A02.\\
\textbf{Keywords.} Lexsegment ideals, primary decomposition, pretty clean modules, sequentially Cohen-Macaulay ideals, Stanley depth.
\end{abstract}

\section{Introduction}

Let $S=K[x_1,\ldots, x_n]$ be the polynomial ring in $n$ variables over a field $K$. We consider the lexicographical
order on the monomials of $S$
induced by  $x_1>x_2>\ldots> x_n$. Let $d\geq 2$ be an integer and $\mathcal{M}_d$ the set of monomials of degree $d$
of $S$. For two monomials
$u,v\in\mathcal{M}_d$,
with $u\geq_{lex}v$, the set $$L(u,v)=\{w\in\mathcal{M}_d\ |\ u\geq_{lex}w\geq_{lex}v\}$$ is called a
\textit{lexsegment set}. A \textit{lexsegment
ideal} in $S$
is a monomial ideal of $S$ which is generated by a lexsegment set. Lexsegment ideals have been introduced by Hulett
and Martin \cite{HM}. Arbitrary
lexsegment ideals have been studied by A. Aramova, E. De Negri, and J. Herzog in \cite{ADH} and \cite{DH}. They
characterized all the lexsegment
ideals which have a linear resolution. In \cite{EOS} it was proved that a lexsegment ideal has a linear resolution if
and only if it has linear
quotients. In the same paper, for a lexsegment ideal $I\subseteq S,$ the dimension and the depth of $S/I$ are computed
and all the lexsegment ideals
which are Cohen-Macaulay are characterized. In \cite{BEOS}, the study of the associated prime ideals of a lexsegment
ideal is proposed. We answer to
this question in Section~\ref{asssection}. As an application, by extending a few results from \cite{HP} to the
multigraded modules over $S,$ we show in
Section~\ref{prettysection} that $S/I$ is a pretty
clean $S$-module for a lexsegment ideal $I\subseteq S$ (Theorem~\ref{pretty}). Consequently, it follows that $S/I$ is
sequentially Cohen-Macaulay (Corollary~\ref{seqCM}) and
the Stanley conjecture (\cite{S})  holds for $S/I$ (Corollary~\ref{Stanley}).\\
\indent\textbf{Acknowledgment.} I am grateful to Professor Viviana Ene for useful discussions and suggestions
during the preparation of the article.

\section{The associated primes of a lexsegment ideal}
\label{asssection}

Let $u=x_1^{a_1}\cdots x_n^{a_n}, v=x_1^{b_1}\cdots x_n^{b_n}\in S$ be two monomials of degree $d$ such that
$u\geq_{\lex}v$ and $I=(L(u,v))$
the lexsegment ideal determined by $u$ and $v.$ It is obviously that we may consider  $a_1>0$ since otherwise we
simply study our ideal in a
polynomial ring with a smaller number of variables. In addition, we exclude the trivial cases $u=v$ and
$I=(L(x_1^d,x_n^d))$. Moreover, we also notice that one may reduce to $b_1=0,$ that is $v$ is
of the form $v=x_q^{b_q}\cdots x_n^{b_n}$ with $q\geq 2$ and $b_q>0.$ Indeed, if $b_1 > 0,$ then, from the exact
sequence of multigraded $S$-modules
\begin{equation}
\label{x1dividesv}
0 \to \frac{S}{(I:x_1^{b_1})} \to \frac{S}{I} \to \frac{S}{(I,x_1^{b_1})}=\frac{S}{(x_1^{b_1})} \to 0,
\end{equation}
 we get
\[
\Ass(S/(I:x_1^{b_1}))\subseteq \Ass(S/I) \subseteq \Ass(S/(I:x_1^{b_1})) \union \{(x_1)\}.
\]
As $(x_1)\in \Ass(S/I)$ since it is a minimal prime of $I,$ we have $\Ass(S/I) = \Ass(S/(I:x_1^{b_1})) \union
\{(x_1)\}.$ Therefore, in order
to determine the associated primes of $I,$ we need to compute the associated primes of $(I:x_1^{b_1})$ which is a
lexsegment ideal generated in
degree $d-b_1$ whose right end, $v/x_{1}^{b_1},$ is no longer divisible by $x_1.$

To begin with, we  consider two important particular classes, namely, initial and final lexsegment ideals. We recall
that
a lexsegment ideal of the form $(L(x_1^d, v))$, $v\in \mathcal{M}_d,$ is called an \textit{initial lexsegment ideal}
determined by $v.$ We denote it by
$(L^i(v)).$ An ideal generated by a lexsegment set of the form $L(u, x_n^d)$ is called a \textit{final lexsegment
ideal} determined by $u\in \mathcal{M}_d.$ We
denote such an ideal by $(L^f(u)).$ We also  recall the following notations. For a monomial $w\in S,$ we denote
$\min(w)=\min\{i: x_i | w\}$, $\max(w)=\max\{i : x_i | w\},$ and $\supp(w)=\{i: x_i|w\}.$ In our study we are going to
use very often the following

\begin{Lemma}\label{supplemma}
Let $I=(L(u,v))$ be a lexsegment ideal with $x_1|u$, $x_1\nmid v$ and $v\neq x_n^d$. Then
\[
 \{(x_1,\ldots,x_j) : j\in \supp(v), j\neq n\}\subseteq\Ass(S/I).
\]
\end{Lemma}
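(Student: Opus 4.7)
The plan is to exhibit, for each $j\in\supp(v)$ with $j\neq n$, a monomial witness $w_j\notin I$ whose colon against $I$ is exactly $(x_1,\dots,x_j)$; this will place the prime in $\Ass(S/I)$ by the usual characterization of associated primes of monomial ideals. My candidate is
\[
w_j\;=\;\frac{v}{x_j}\,x_n^{\,d-1-b_n}.
\]
It is well defined because $x_j\mid v$ (as $j\in\supp(v)$) and $b_n\leq d-1$ (as $v\neq x_n^d$), and its design feature is that $j\neq n$ keeps the $x_n$-exponent of $v/x_j$ equal to $b_n$, so $w_j$ carries $x_n^{d-1}$ exactly.

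For the inclusion $(x_1,\dots,x_j)\subseteq(I:w_j)$ I would split into three sub-cases. The case $i=j$ is immediate: $x_jw_j=v\,x_n^{d-1-b_n}$ is a multiple of $v\in L(u,v)$. For $2\leq i<j$ I would observe that $x_i(v/x_j)$ is itself a degree-$d$ generator of $I$---it is lex-greater than $v$ because we swapped $x_j$ for the lex-larger $x_i$, and lex-smaller than $u$ because it has no $x_1$ while $u$ does---and then $x_iw_j$ is a multiple of this generator. For $i=1$ I would use the generator $g=x_1x_n^{d-1}$, which one checks lies in $L(u,v)$ (this is the one place where the hypotheses $x_1\mid u$ and $v\neq x_n^d$ jointly enter) and divides $x_1w_j$ precisely because $w_j$ was padded with enough $x_n$'s.

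For the reverse inclusion $(I:w_j)\subseteq(x_1,\dots,x_j)$ I would prove the stronger claim that $mw_j\notin I$ for every monomial $m$ with $\supp(m)\subseteq\{j+1,\dots,n\}$; specializing to $m=1$ simultaneously gives $w_j\notin I$. Assume a generator $g\in L(u,v)$ divides $mw_j$. Every factor of $mw_j$ has support inside $\{q,\dots,n\}$, where $q=\min\supp(v)$, so the same holds for $g$. The crucial observation is that the $x_\ell$-exponent of $mw_j$ equals $b_\ell$ for $q\leq\ell<j$ and equals $b_j-1$ at $\ell=j$, because neither $m$ nor the $x_n$-padding contributes at indices $\leq j<n$. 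Feeding these bounds into the lex-comparison $g\geq_{\lex}v$ forces the exponent of $g$ at $x_\ell$ to agree with that of $v$ for $q\leq\ell<j$, and then at $x_j$ the divisibility bound $\leq b_j-1$ collides with the lex requirement $\geq b_j$---contradiction.

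The main obstacle is this lex-comparison bookkeeping in the second inclusion: one has to propagate the forced equalities through the range $q\leq\ell<j$ before cornering the contradiction at $\ell=j$, and handle the degenerate configuration $q=j$ (in which the propagation is vacuous and the contradiction is read off at $x_j$ immediately). The secondary technical point is the verification that $x_1x_n^{d-1}\in L(u,v)$, needed in Step~1 for $i=1$: this splits into the trivial case $a_1\geq 2$ and the case $a_1=1$, where one invokes the fact that $x_n^{d-1}$ is the lex-smallest degree-$(d-1)$ monomial in $K[x_2,\dots,x_n]$.
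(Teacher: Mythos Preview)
Your proof is correct and follows the same strategy as the paper: the paper's witness is $(v/x_j)\,x_n^{\,d-b_n}$, differing from yours only by one extra factor of $x_n$, and the forward inclusion $(x_1,\dots,x_j)\subseteq I:w$ is verified in the same way. For the reverse inclusion the paper replaces your coordinate-by-coordinate bookkeeping with a one-line cancellation in the lex order (from $wz=mm'$ with $m\geq_{\lex}v$ one gets $v\,x_n^{d-b_n}z=x_jmm'\geq_{\lex}x_jvm'$, hence $x_n^{d-b_n}z\geq_{\lex}x_jm'$, which is impossible since $\supp(z)\subseteq\{j+1,\dots,n\}$), but the underlying idea is identical.
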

\begin{proof}
For $j\in \supp(v)\backslash \{n\}$ let $w=(v/x_j)x_{n}^{d-b_n}$. We can conclude that $w\notin I$. Indeed, if $w\in
I$, then $w=m\cdot m'$ for some monomial $m\in L(u,v)$ and $m'$,
we get $w\geq_{lex}v m'$ which yields $x_n^{d-b_n}\geq_{lex}x_j m'$, which is impossible. For all $2\leq i\leq j$,
$x_iw=(x_iv/x_j)x_n^{d-b_n}\geq_{lex}vx_n^{d-b_n}$ and $x_1\nmid (x_iv/x_j)x_n^{d-b_n}$, we have
$x_iw=(x_iv/x_j)x_n^{d-b_n}\in I$. Since $x_1x_n^{d-1}\in I$, it follows that
$x_1w=x_1(v/x_j)x_n^{d-b_n}=(v/(x_jx_n^{b_n-1}))(x_1x_n^{d-1})\in I.$ Therefore $(x_1,\dots,x_j)\subseteq I:w$. Let us
assume
that there exists a monomial $z \in I:w$ with $z\notin (x_1,\dots,x_j)$, that is, $\supp(z)\subseteq \{j+1,\dots,n\}$
and $w z\in I$. Let $m\in L(u,v)$
such that $w z=m m'$ for some monomial $m'$. Then we get $v x_n^{d-b_n}z=x_j m m'\geq_{lex}x_j v m'$, which gives $z
x_n^{d-b_n}\geq_{lex}x_j m'$ which is contradict with $\supp(z)\subseteq\{j+1,\dots,n\}$. We thus have shown that
$I:w=(x_1,\dots,x_j)$, which implies that $(x_1,\dots,x_j)\in \Ass(S/I)$.
\end{proof}

\begin{Proposition}
\label{assinitial}
Let $v\in \mathcal{M}_d$ be a monomial and let $I=(L^i(v))$ the initial ideal determined by $v.$ Then
\[
\Ass(S/I)=\{(x_1,\ldots,x_j): j\in \supp(v)\cup \{n\}\}.
\]
\end{Proposition}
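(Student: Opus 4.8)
The plan is to prove the two inclusions separately. For the inclusion $\supseteq$, note that an initial lexsegment ideal $I=(L^i(v))=(L(x_1^d,v))$ certainly has $x_1\mid u=x_1^d$, and the right end $v$ may or may not be divisible by $x_1$. If $x_1\nmid v$ and $v\neq x_n^d$ (the relevant non-trivial case), then Lemma~\ref{supplemma} immediately gives $\{(x_1,\dots,x_j):j\in\supp(v),\ j\neq n\}\subseteq\Ass(S/I)$, so it remains only to show $(x_1,\dots,x_n)=\mm\in\Ass(S/I)$ and, if $n\in\supp(v)$, that $(x_1,\dots,x_{n-1})$ is also associated (but the latter is already covered since one can check directly that $L^i(v)$ contains a witness). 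To get $\mm\in\Ass(S/I)$ I would exhibit an explicit monomial $w$ (for instance $w=v/x_{\max(v)}$, or a suitable monomial just below $v$ in the lex order of degree $d-1$) whose annihilator is exactly $\mm$: every variable $x_i$ times $w$ lands in $I$ because multiplying pushes the monomial up past $v$ or produces something lex-larger, while $w$ itself is not in $I$ because it has degree $d-1<d$ and $I$ is generated in degree $d$. The case $v=x_1^e x_n^{d-e}$ or other degenerate shapes of $v$ should be handled by the same style of argument, and when $x_1\mid v$ one first reduces via the exact sequence \eqref{x1dividesv} as explained in the text.

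For the reverse inclusion $\subseteq$, I would argue that $I=(L^i(v))$ with $v=x_1^{b_1}\cdots x_n^{b_n}$ can be analyzed by a short induction or by a direct combinatorial description of its primary decomposition. The key observation is that a monomial $m$ of degree $d$ lies in $L^i(v)=\{w:w\geq_{lex}v\}$ precisely when there is some index $j$ with $x_1^{a_1}\cdots x_j^{a_j}$ (the degree of $m$ restricted to the first $j$ variables) strictly exceeding $b_1+\cdots+b_j$, or equality holds through all of $\supp(v)$. From this one can read off that $\sqrt{I}=(x_1)$ is the unique minimal prime when... actually more carefully, $\Min(I)=\{(x_1)\}$ since every generator is divisible by $x_1$ and $x_1^d\in I$. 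Then any embedded prime $\pp\in\Ass(S/I)$ must properly contain $(x_1)$, hence equals $(x_1,\dots,x_j)$ for various $j$ after one shows that $\Ass(S/I)$ consists only of primes of the form $(x_1,\dots,x_j)$; this last fact follows because $S/I$ has a filtration (or because $I$ has linear quotients, giving a primary decomposition) in which every associated prime is generated by an initial segment of the variables. Matching the admissible $j$'s with $\supp(v)\cup\{n\}$ is then the combinatorial core.

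Concretely, I expect the cleanest route is: first compute the irredundant primary decomposition of $I=(L^i(v))$ directly. Writing $v=x_{j_1}^{c_1}\cdots x_{j_r}^{c_r}$ with $j_1=1<j_2<\cdots<j_r$ (so $\supp(v)=\{j_1,\dots,j_r\}$), one can show $I=\bigcap_{k=1}^{r}(x_1,\dots,x_{j_k})^{(\text{suitable power involving }c_1+\cdots+c_k)}$ intersected possibly with an $x_n$-term, by checking that a degree-$d$ monomial survives all these symbolic powers iff it is $\geq_{lex}v$. Reading off the radicals of the primary components gives exactly $\{(x_1,\dots,x_j):j\in\supp(v)\}$, and the extra component $(x_1,\dots,x_n)$ arises from the ``leftover'' degree once one reaches the tail $x_n$; hence $\supp(v)\cup\{n\}$.

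The main obstacle I anticipate is pinning down the exact primary decomposition, in particular the correct exponents and handling the boundary cases — when $n\in\supp(v)$ already, when $v$ is a pure power $x_n^d$ (excluded) or $x_j^d$, and when $b_1>0$ so that the reduction via \eqref{x1dividesv} must be invoked first. The inclusion $\supseteq$ via Lemma~\ref{supplemma} is essentially free; the work is in proving that \emph{no other} primes appear, i.e. that $\Ass(S/I)$ is forced to lie among the $(x_1,\dots,x_j)$ and among those only the indices in $\supp(v)\cup\{n\}$ occur. I would get the first half from the linear-quotients structure of $L^i(v)$ (cited from \cite{EOS}) and the second half from the explicit decomposition above.
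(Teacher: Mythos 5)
Your inclusion $\supseteq$ is on the right track in invoking Lemma~\ref{supplemma}, but the explicit witness you propose for $\mm\in\Ass(S/I)$ does not work in general. If $w=v/x_{\max(v)}$ and $\max(v)<n$, then $x_nw=x_n v/x_{\max(v)}<_{\lex}v$ (since $n>\max(v)$), so $x_nw\notin I$ and $I:w\neq\mm$. (Your hedge ``or a suitable monomial just below $v$'' does not salvage this.) A witness that always works, after the reduction to $x_1\nmid v$, is $w=x_1^{d-1}$: for every $i$, $x_iw=x_1^{d-1}x_i$ has $x_1$-degree $d-1\geq 1>0=\deg_{x_1}(v)$, hence $x_iw>_{\lex}v$ and $x_iw\in I$, while $w\notin I$ by degree, so $I:w=\mm$. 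The paper instead gets $\mm\in\Ass(S/I)$ for free from $\depth(S/I)=0$, which is supplied by \cite[Proposition 3.2]{EOS}; either route is fine once corrected.

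For the inclusion $\subseteq$ you offer two plans (linear quotients, or an explicit primary decomposition into symbolic powers of prefix primes) but carry out neither; you explicitly acknowledge that ``pinning down the exact primary decomposition'' is the obstacle. That is precisely the content of the proof, so as written this is a gap, not a proof. The paper's argument is concrete: it first cites \cite[Proposition 4.2.9]{HH} to force any $P\in\Ass(S/I)$ with $P\neq\mm$ to have the form $(x_1,\ldots,x_j)$, $j<n$ (this is the step your ``linear quotients'' remark is gesturing at). Then, assuming $j\notin\supp(v)$, it takes a witness $w$ with $I:w=P$, analyzes a factorization $x_jw=u'm$ with $u'\in L^i(v)$, shows step by step that $u'$ must equal $x_q^{b_q}\cdots x_{j-1}^{b_{j-1}}x_j x_{j+1}^{c_{j+1}}\cdots x_n^{c_n}$, and finally derives a contradiction by noting $v/\gcd(v,w)\in I:w=(x_1,\ldots,x_j)$ is impossible when $x_j\nmid v$. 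If you want to pursue your primary-decomposition route instead, you would need to actually produce and verify the decomposition (including the exact exponents and the role of the $x_n$-component), which is nontrivial and not contained in your sketch.
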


\begin{proof}
As we have observed before, we can assume that $v=x_q^{b_q}\cdots x_n^{b_n}$ with $q\geq 2$ and $b_q>0$. By
Lemma~\ref{supplemma} and \cite[Proposition 3.2]{EOS} we have
$\{(x_1,\ldots,x_j): j\in \supp(v)\cup \{n\}\}\subseteq \Ass(S/I)$.\\
\indent Let $P \in \Ass(S/I)$, $P\neq (x_1,\dots,x_n)$. By \cite[Proposition 4.2.9]{HH} we have $P=(x_1,\dots,x_j)$,
for some $1\leq j<n$. We want to
show that $j\in\supp(v)$. Let us assume $j\notin \supp(v)$. Since $P\supseteq I\supseteq (x_1^d,\dots,x_q^d)$ it
follows that $j>q$.
\indent Let $w$ be a monomial such that $w\notin I$ and $P=I:w$. We have $x_jw\in I$, hence there exists
$u'\geq_{lex}v$ such that $x_jw=u'm$, for
some monomial $m$. We have $x_j\nmid m$ since, otherwise, $w\in I$. For any $i<j$, we have $x_i\nmid m$ since,
otherwise,
$x_iu'/x_j>_{\lex}u'\geq_{\lex}v$, and  $w=\frac{x_iu'}{x_j}\cdot\frac{m}{x_i}\in I$, contradiction. Therefore, $m$ is
a monomial in
$K[x_{j+1},\dots,x_n]$. We can conclude that $\min(\supp(u'))\geq q$. If there exists $i\leq q-1$ such that $x_i|u'$,
then, for any $l$ such that $x_l|m$, we have $i<q<j<l$. Since $\min(\supp(u'))\geq q$, we have $(u'/x_j)x_l>_{lex} v$
by the definition of lexicographical order. Hence $w=(\frac{u'}{x_j}x_l)\cdot \frac{m}{x_l}\in I$,
contradiction again. That is $u'$ is of the form
\begin{equation}\label{1}
u'=x_q^{c_q}\cdots x_n^{c_n}\geq_{lex} x_q^{b_q}\cdots x_n^{b_n}
\end{equation}
If there exists $l$ such that $x_l|m$ and $u'x_l/x_j\geq_{\lex} v$, then as above, $w\in I$, a contradiction.
Therefore we must have
\begin{equation}\label{2}
u'x_l<x_jv \  \text{for all $l$ such that} \ x_l|m.
\end{equation}
Using (\ref{1}), and (\ref{2}) and $j\notin \supp(v)$ and by comparing the exponents in the monomials $u'$ and $v$, we
get $u'=x_q^{b_q}\cdots
x_{j-1}^{b_{j-1}}x_jx_{j+1}^{c_{j+1}}\dots x_n^{c_n}$, for some $c_{j+1},\ldots,c_n$, hence $$w=x_q^{b_q}\cdots
x_{j-1}^{b_{j-1}}x_{j+1}^{c_{j+1}}\cdots
x_n^{c_n}\cdot m$$ with $m\in K[x_{j+1},\ldots,x_n]$. Since $\frac{v}{\gcd(v,w)}\in I:w$, we must have
$\frac{v}{\gcd(v,w)}\in (x_1,\ldots,x_j)$, which
is impossible since $x_j\nmid v$ and $x_q^{b_q}\cdots x_{j-1}^{b_{j-1}}|\gcd(v,w)$.
\end{proof}

In the next step, we consider final lexsegment ideals.
First of all we observe that one should consider only  final lexsegment ideals
defined by a monomial $u\in \mathcal{M}_d$ such that $x_1|u.$ Indeed, otherwise, we are reduced to considering the
problem in a polynomial ring
with a smaller number of variables, namely $K[x_{\min(u)},\ldots,x_n]$.

\begin{Proposition}
\label{assfinal}
Let $u\in \mathcal{M}_d, u\neq x_1^d,$ with $x_1 | u$ and $I=(L^f(u))$ be the final lexsegment ideal defined by $u$.
Then
\[
\Ass(S/I)=\{(x_1,\ldots,x_n),(x_2,\ldots,x_n)\}.
\]
\end{Proposition}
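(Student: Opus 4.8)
The plan is to determine $\Ass(S/I)$ for a final lexsegment ideal $I=(L^f(u))$ with $x_1\mid u$ and $u\neq x_1^d$. First I would establish the two inclusions giving the asserted primes. Since $I$ is generated in degree $d$ with $x_1^d\in I$ (as $u\leq_{\lex}x_1^d$ forces $x_1^d$ to lie in the lexsegment $L(u,x_n^d)$), the ideal $I$ has $(x_1,\ldots,x_n)$ as an associated prime precisely when $\depth S/I=0$; this can be quoted from \cite{EOS}, where the depth of $S/I$ for a final lexsegment ideal is computed, or proven directly by exhibiting a socle element. For the prime $(x_2,\ldots,x_n)$, I would show it is a minimal prime of $I$: every generator $w\in L(u,x_n^d)$ with $w\neq x_1^d$ satisfies $\max(\supp(w))\geq 2$, so $w\in(x_2,\ldots,x_n)$ unless $w=x_1^d$; but since $u\neq x_1^d$ there exists a generator divisible only by $x_1$? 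No — rather, one checks directly that $I\subseteq(x_2,\ldots,x_n)$ fails (because $x_1^d\in I$), so $(x_2,\dots,x_n)$ is \emph{not} a minimal prime; instead one shows $(x_2,\ldots,x_n)=I:w$ for a suitable monomial $w$, e.g. a high power of $x_1$ times an appropriate correction, giving $(x_2,\dots,x_n)\in\Ass(S/I)$.

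The cleaner route, which I would actually follow, is to use the short exact sequence trick analogous to \eqref{x1dividesv} but now quotienting by $x_1$. Writing $u=x_1^{a_1}u'$ with $u'\in K[x_2,\ldots,x_n]$ of degree $d-a_1$ and $a_1\geq 1$, consider
\[
0\to \frac{S}{(I:x_1)}\to\frac{S}{I}\to\frac{S}{(I,x_1)}\to 0.
\]
One computes $(I,x_1)=(x_1)+(\text{monomials of }\mathcal{M}_d\text{ of the form }x_2^{\ast}\cdots)$; in fact $(I,x_1)/(x_1)$ corresponds inside $K[x_2,\ldots,x_n]$ to the ideal generated by $\{w\in\mathcal{M}_d\cap K[x_2,\dots,x_n] : w\geq_{\lex}u\}\cup\{\text{tails of generators of }I\}$, and since $L(u,x_n^d)$ contains \emph{all} monomials of degree $d$ below $u$, the image is a final (indeed an initial-type) lexsegment in $K[x_2,\ldots,x_n]$ whose associated primes are controlled by Proposition~\ref{assinitial} or a direct argument, yielding $\Ass(S/(I,x_1))=\{(x_2,\ldots,x_n)\}$ (using $d\geq 2$ and $u\neq x_1^d$ to see this quotient is nonzero and its radical is $(x_2,\ldots,x_n)$). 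Meanwhile $(I:x_1)$ is again a lexsegment-type ideal whose associated primes I would show to be contained in $\{(x_1,\ldots,x_n)\}$, using that $x_1^{d-1}\in(I:x_1)$ and the colon operation shrinks the lexsegment toward the top. Combining via $\Ass(S/(I:x_1))\subseteq\Ass(S/I)\subseteq\Ass(S/(I:x_1))\cup\Ass(S/(I,x_1))$ gives the result.

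Concretely, the key steps in order are: (1) reduce to $u=x_1^{a_1}x_j^{b_j}\cdots x_n^{b_n}$ with $j\geq 2$; (2) show $(x_1,\ldots,x_n)\in\Ass(S/I)$ by producing a monomial $w\notin I$ with $\mm w\subseteq I$, which one can take of the form $w=u/(x_1 x_{\max(u)})\cdot(\text{something})$ or simply invoke the depth computation from \cite{EOS}; (3) show $(x_2,\ldots,x_n)\in\Ass(S/I)$ by finding $w$ with $I:w=(x_2,\ldots,x_n)$ — a natural candidate is $w=x_1^{a_1-1}x_j^{b_j}\cdots x_n^{b_n}\cdot x_1^{N}$ adjusted so that $x_iw\in I$ for $i\geq 2$ (pushing the monomial up past $u$ in lex order) while $w\notin I$ and $x_1w\notin I$; (4) prove no other prime occurs by showing any $P\in\Ass(S/I)$ is of the form $(x_1,\ldots,x_j)$ via \cite[Proposition 4.2.9]{HH}, then ruling out $3\leq j\leq n-1$ using the final lexsegment structure (for such $P$ one would need a witness $w$ with $x_jw\in I$, i.e. $x_jw$ divisible by some $m\geq_{\lex}u$, but also $x_{j+1}w\notin I$, and the ordering forces a contradiction just as in the proof of Proposition~\ref{assfinal}'s sibling results).

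The main obstacle I anticipate is step (3)/(4): pinning down exactly which monomial $w$ witnesses $(x_2,\ldots,x_n)$ and, dually, showing that no $P=(x_1,\ldots,x_j)$ with $3\leq j\leq n-1$ is associated. The subtlety is that final lexsegments behave very differently from initial ones — the ideal $L(u,x_n^d)$ contains a huge "bottom" piece (everything below $u$), so colon ideals $I:w$ tend to jump straight to $\mm$ or to $(x_2,\ldots,x_n)$ with nothing in between, and making this dichotomy precise requires a careful case analysis on $\supp(w)$ and on whether $x_1\mid w$, comparing $x_jw$ and $u$ in the lex order exactly as in Lemma~\ref{supplemma}. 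I would handle it by the same style of argument: assume $w\notin I$ with $I:w=(x_1,\dots,x_j)$, factor $x_jw=u'm$ with $u'\geq_{\lex}u$, and derive that $x_{j}$ cannot divide $m$ while simultaneously some variable $x_l$ with $l>j$ (which must divide $w$, else $w\in(x_1,\dots,x_{j})$-related monomial forces $w\in I$) lets us replace and push $u'$ still higher, contradicting $w\notin I$ — unless $j\in\{1,n\}$ or the configuration collapses to $j=n$, i.e. $P=\mm$, or $j$ is forced down to give $(x_2,\dots,x_n)$ after the $b_1=0$-type normalization.
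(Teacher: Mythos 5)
There is a genuine error at the very start that derails the argument. You claim $x_1^d\in I$ because ``$u\leq_{\lex}x_1^d$ forces $x_1^d$ to lie in the lexsegment $L(u,x_n^d)$.'' This is backwards: $L(u,x_n^d)=\{w\in\mathcal{M}_d : u\geq_{\lex}w\geq_{\lex}x_n^d\}$ consists of monomials at or \emph{below} $u$, and since $u\neq x_1^d$ we have $u<_{\lex}x_1^d$, so $x_1^d\notin L(u,x_n^d)$; as $I$ is generated in degree $d$, $x_1^d\notin I$. This false belief is precisely what makes you abandon the easy argument that $(x_2,\ldots,x_n)$ is a minimal prime and instead go hunting for a colon witness. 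In fact $(x_2,\ldots,x_n)$ \emph{is} the unique minimal prime of $I$: every generator $w\leq_{\lex}u<_{\lex}x_1^d$ is divisible by some $x_j$ with $j\geq 2$, so $I\subseteq(x_2,\ldots,x_n)$; and since $x_1\mid u$, every degree-$d$ monomial in $K[x_2,\ldots,x_n]$ is $<_{\lex}u$, so $(x_2,\ldots,x_n)^d\subseteq I$. Together these give $\sqrt{I}=(x_2,\ldots,x_n)$.

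The error also contaminates your exact-sequence route. You assert $\Ass(S/(I,x_1))=\{(x_2,\ldots,x_n)\}$, but $(I,x_1)=(x_1)+(x_2,\ldots,x_n)^d$, so $S/(I,x_1)$ is annihilated by $x_1$ and its radical is $\mathfrak{m}=(x_1,\ldots,x_n)$; hence $\Ass(S/(I,x_1))=\{\mathfrak{m}\}$, not $\{(x_2,\ldots,x_n)\}$ (you are conflating the associated prime $(x_2,\ldots,x_n)$ of the image ideal inside $K[x_2,\ldots,x_n]$ with the associated prime of $S/(I,x_1)$ as an $S$-module). Combined with your claim $\Ass(S/(I:x_1))\subseteq\{\mathfrak{m}\}$, the containment $\Ass(S/I)\subseteq\Ass(S/(I:x_1))\cup\Ass(S/(I,x_1))$ would yield $\Ass(S/I)\subseteq\{\mathfrak{m}\}$, contradicting the statement you are trying to prove. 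The paper's actual argument is much shorter and avoids all of this: (i) $\depth(S/I)=0$ by \cite[Proposition 3.2]{EOS}, so $\mathfrak{m}\in\Ass(S/I)$; (ii) $(x_2,\ldots,x_n)^d\subseteq I$ forces $(x_2,\ldots,x_n)\subseteq P$ for every $P\in\Ass(S/I)$, so the only candidates are $\mathfrak{m}$ and $(x_2,\ldots,x_n)$; (iii) $(x_2,\ldots,x_n)$ is a minimal prime of $I$ (as above), hence associated. No colon witness, exact sequence, or case analysis on $j$ is required.
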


\begin{proof}
By \cite[Proposition 3.2]{EOS}, we have $\depth(S/I)=0$, hence $(x_1,\dots,x_n)\in \Ass(S/I)$. On the other hand, for
any $P\in \Ass(S/I)$, we have $(x_2,\dots,x_n)\subseteq P$ since $I\supseteq (x_2,\dots,x_n)^d$. Since
$(x_2,\ldots,x_n)$ is obviously a minimal prime of $I,$ we have $(x_2,\ldots,x_n)\in \Ass(S/I).$ Therefore, the only
associated primes of $I$ are $\mathfrak{m}=(x_1,\ldots,x_n)$ and $(x_2,\ldots,x_n).$
\end{proof}

In order to compute the associated primes of an arbitrary lexsegment ideal, that is, one which is neither initial nor
final, we are going to
distinguish several cases, depending on the depth of $S/I.$ We recall that, by \cite[Proposition 3.2]{EOS},
$I=(L(u,v))$ has $\depth(S/I)=0$ if and only if
$x_nu\geq_{\lex}x_1v.$

\begin{Proposition}
\label{assdepth0}
Let $I=(L(u,v))$ be a lexsegment ideal which is neither initial nor final, with $x_1\nmid v,$ and such that
$\depth(S/I)=0$ Then
\[
\Ass(S/I)=\{(x_1,\ldots,x_j) : j\in \supp(v)\union\{n\}\}\union\{(x_2,\ldots,x_n)\}.
\]
\end{Proposition}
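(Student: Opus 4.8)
The plan is to prove the two inclusions separately, using the exact sequence (\ref{x1dividesv})-style short exact sequences together with the two particular cases already handled (Propositions~\ref{assinitial} and~\ref{assfinal}) and Lemma~\ref{supplemma}. For the inclusion ``$\supseteq$'' the bulk of the primes $(x_1,\ldots,x_j)$ with $j\in\supp(v)\setminus\{n\}$ comes directly from Lemma~\ref{supplemma}, since the hypotheses $x_1\mid u$, $x_1\nmid v$, $v\neq x_n^d$ are in force (the case $v=x_n^d$ would make $I$ final, which is excluded). The maximal ideal $\mathfrak m=(x_1,\ldots,x_n)$ is associated because $\depth(S/I)=0$ by hypothesis, and $(x_2,\ldots,x_n)$ is associated because it is a minimal prime of $I$: indeed $I\supseteq(x_2,\ldots,x_n)^d$ (every monomial of degree $d$ in $x_2,\ldots,x_n$ is $\leq_{\lex}v$ when $x_1\nmid v$, hence lies in $L(u,v)$ provided it is also $\geq_{\lex}v$—one checks $x_n^d\leq_{\lex}w\leq_{\lex}x_2^d$ and compares with $v$), and $\height(x_2,\ldots,x_n)=n-1$ equals the codimension, so it is a minimal prime and therefore associated.

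For the inclusion ``$\subseteq$'' the strategy is to take $P\in\Ass(S/I)$ with $P\neq\mathfrak m$; by \cite[Proposition 4.2.9]{HH} (monomial ideals have associated primes generated by subsets of the variables, and here the relevant ones are the ``initial'' intervals) $P$ is of the form $(x_1,\ldots,x_j)$ for some $j<n$ or $P=(x_2,\ldots,x_n)$. The latter is allowed, so assume $P=(x_1,\ldots,x_j)$, $1\le j<n$, and pick a monomial $w\notin I$ with $P=I:w$. I would then argue exactly as in the proof of Proposition~\ref{assinitial}: from $x_jw\in I$ write $x_jw=u'm$ with $u'\in L(u,v)$, deduce that $m\in K[x_{j+1},\ldots,x_n]$ and that the lex-constraints force $u'$ to agree with $v$ in the first $j$ exponents; then using $v/\gcd(v,w)\in I:w=(x_1,\ldots,x_j)$ together with $x_j\nmid v$ when $j\notin\supp(v)$ derive a contradiction, so $j\in\supp(v)$. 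The one genuinely new wrinkle compared with the initial case is that now $u\neq x_1^d$, so $u'$ need not be a pure power $x_1^{c_1}$; one has to check that the lex inequality $u\geq_{\lex}u'$ does not open up extra possibilities—this is where comparing $x_nu\geq_{\lex}x_1v$ (the $\depth=0$ hypothesis) is used to keep the interval $L(u,v)$ from being ``too short.''

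The main obstacle I expect is precisely this interaction between the left endpoint $u$ and the right endpoint $v$: in Proposition~\ref{assinitial} one had $u=x_1^d$, so $u'\geq_{\lex}u$ was vacuous and the only constraint on $u'$ came from $u'\geq_{\lex}v$; here $u'$ must also satisfy $u'\leq_{\lex}u$, and one must verify that this upper constraint is compatible with the monomial $u'=x_q^{b_q}\cdots x_{j-1}^{b_{j-1}}x_jx_{j+1}^{c_{j+1}}\cdots x_n^{c_n}$ produced by the argument—equivalently, that such a $u'$ genuinely lies in $L(u,v)$, which should follow from $\depth(S/I)=0$, i.e. $x_nu\geq_{\lex}x_1v$. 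Handling this compatibility, and separately ruling out that $P$ could be an interval $(x_i,\ldots,x_j)$ with $i\geq 2$ other than $(x_2,\ldots,x_n)$ (here one uses $x_1\mid u$, so $x_1$ divides a generator and hence every associated prime either contains $x_1$ or is forced to contain all of $x_2,\ldots,x_n$), are the two places where care is needed; the rest is a transcription of the two preceding proofs.
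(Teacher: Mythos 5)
Your proposed route — mimic the direct ``find $w$ with $I:w=P$'' argument from Proposition~\ref{assinitial} — is genuinely different from the paper, which instead decomposes $I=(I,x_1^{a_1})\cap(I:x_1^{a_1})$, uses the four-term exact sequence to reduce to $\Ass(S/(I,x_1^{a_1}))$ and $\Ass(S/(I:x_1^{a_1}))$, and handles each piece by a further reduction (the first to an initial lexsegment over $K[x_2,\ldots,x_n]$, the second by showing it is $(x_2,\ldots,x_n)$-primary). Unfortunately, as written your proposal has two concrete gaps.

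First, the inclusion $I\supseteq(x_2,\ldots,x_n)^d$ is simply false for a non-final lexsegment ideal: a monomial $w$ of degree $d$ in $x_2,\ldots,x_n$ lies in $I$ only if $w\geq_{\lex}v$, and since $v\neq x_n^d$ there are plenty of such $w$ strictly below $v$. (This containment is exactly what characterizes the \emph{final} case, which is Proposition~\ref{assfinal}.) So your justification that $(x_2,\ldots,x_n)$ is a minimal prime of $I$ collapses. It turns out that $(x_2,\ldots,x_n)$ is indeed a minimal prime here, but proving it requires using $\depth(S/I)=0$ (for instance, when $a_1=1$ the depth hypothesis forces $l\le q$, which is what guarantees that for every $i\ge 2$ some monomial $x_1^ax_i^{d-a}$ lies in $L(u,v)$); the paper avoids this by instead showing $\Ass(S/(I:x_1^{a_1}))=\{(x_2,\ldots,x_n)\}$, which for $a_1=1$ again leans on $\depth=0$ to get $L^f(v)\subseteq I:x_1$.

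Second, your upper bound on $\Ass(S/I)$ is not established. \cite[Proposition 4.2.9]{HH} is a statement about \emph{stable} monomial ideals; an arbitrary lexsegment $(L(u,v))$ with $u\neq x_1^d$ is not stable, so that proposition does not give you that every $P\in\Ass(S/I)$ is an initial interval $(x_1,\ldots,x_j)$ or equals $(x_2,\ldots,x_n)$. Your suggested fix (``every associated prime either contains $x_1$ or contains all of $x_2,\ldots,x_n$'') is an assertion, not an argument: a prime $P$ with $x_1\notin P$ only tells you that $P\in\Ass(S/(I:x_1^\infty))=\Ass(S/(I:x_1^{a_1}))$, which brings you back to analyzing $I:x_1^{a_1}$ — precisely the computation the paper does and your sketch sidesteps. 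In short, the two places you flag as ``places where care is needed'' are exactly where the argument is missing, and the paper's exact-sequence decomposition is what supplies them.
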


\begin{proof}
Since $u\neq x_1^d$, we have $I=(I,x_1^{a_1})\cap(I:x_1^{a_1})$. We get the following exact sequence of $S$-modules:
\begin{equation}\label{e1}
0\longrightarrow S/I\longrightarrow S/(I,x_1^{a_1})\oplus S/(I:x_1^{a_1})\longrightarrow
S/((I,x_1^{a_1})+(I:x_1^{a_1}))\longrightarrow 0
\end{equation}
We note that $(I,x_1^{a_1})+(I:x_1^{a_1})=(x_1^{a_1})+(I:x_1^{a_1})$. We first prove \\
$\Ass\Big(S/((I,x_1^{a_1})+(I:x_1^{a_1}))\Big)=\{(x_1,\ldots,x_n)\}$ and
$\Ass\Big(S/(I:x_1^{a_1})\Big)=\{(x_2,\ldots,x_n)\}$. If $a_1>1$, then
$I:x_1^{a_1}\supseteq(x_2,\dots,x_n)^{d-a_1+1},$ hence
$(I,x_1^{a_1})+(I:x_1^{a_1})$ is an $\mathfrak{m}$-primary monomial ideal, where $\mathfrak{m}=(x_1,\dots,x_n)$ and
$\Ass\Big(S/(I:x_1^{a_1})\Big)=\{(x_2,\ldots,x_n)\}$. Let $a_1=1$. Then we show that
$(I:x_1)\supseteq (x_2,\dots,x_n)^d$, which will imply again that $(I,x_1^{a_1})+(I:x_1^{a_1})=(I,x_1)+(I: x_1)$ is
$\mathfrak{m}$-primary. Since all the monomials $w$ of degree $d$ with $x_2^d \geq_{\lex}w\geq_{\lex}v$ are already
contained in $I,$ thus in $I:x_1$ as well. Hence, we only need to show that $L^f(v)\subseteq(I:x_1)$. Let us assume
that there exists a monomial $w$ of degree $d$ with $w<_{lex}v$ such that $w\notin
(I:x_1)$, then $x_1w<_{lex}x_1v\leq_{lex} x_nu.$ As $x_1|\frac{x_1w}{x_{\min(w)}}$, $x_1\nmid v$, we have
$\frac{x_1w}{x_{\min(w)}}>_{lex}v$. By $x_1\notin (I:x_1)$, we have $\frac{x_1w}{x_{\min(w)}}>_{lex}u$. Therefore,
$w\geq_{lex}\frac{x_nw}{x_{\min(w)}}>_{lex}\frac{x_nu}{x_1}\geq_{lex}v$, where the last inequality follows from the
condition $\depth(S/I)=0$. But then we get $w\geq_{lex}v$, a
contradiction. Consequently, we have shown that
$$\Ass\Big(S/((I,x_1^{a_1})+(I:x_1))\Big)=\{(x_1,\dots,x_n)\}$$ and $\Ass(S/(I:x_1))=\{(x_2,\ldots,x_n)\}$. Since
$\depth(S/I)=0$, hence
$\mathfrak{m}\in \Ass(S/I),$ by using the exact
sequence (\ref{e1}), we get
\begin{multline}\label{e5}
\Ass(S/I)=\Ass(S/(I,x_1^{a_1}))\cup \Ass(S/(I:x_1^{a_1}))=\\=\Ass(S/(I,x_1^{a_1}))\cup \{(x_2,\dots,x_n)\}
\end{multline}

Let us first take $a_1=1$. It is clear that $P\in \Ass_S(S/(I,x_1))$ if and only if $P=(x_1,P')$, where $P'\in
\Ass_{S'}(S'/(L^i(v)))$, where
$S'=K[x_2,\dots,x_n]$. By using Proposition~\ref{assinitial}, we get $\Ass(S/(I,x_1))=\{(x_1,\dots,x_j):j\in
\supp(v)\cup \{n\}\}$ and our proof is
completed in this case.

Let $a_1>1$. Then we consider the exact sequence of $S$-modules:
\begin{equation}\label{e3}
0\longrightarrow (I,x_1)/(I,x_1^{a_1})\longrightarrow S/(I,x_1^{a_1})\longrightarrow S/(I,x_1)\longrightarrow 0.
\end{equation}

Since $x_1^{d-1}(I,x_1)\subseteq (I,x_1^{a_1})$ and $(x_2,\dots,x_n)^{d-1}(I,x_1)\subseteq I\subseteq (I,x_1^{a_1})$,
it follows that
$\Ann_S((I,x_1)/(I,x_1^{a_1}))$ contains an $\mathfrak{m}$-primary ideal, thus we have
$$\Ass((I,x_1)/(I,x_1^{a_1}))=\{\mathfrak{m}\}.$$ From the exact
sequence (\ref{e3}) and using the above computation for $\Ass(S/(I,x_1))$, we obtain $\mathfrak{m}\in
\Ass(S/(I,x_1^{a_1}))$ and
$\Ass(S/(I,x_1^{a_1}))\subseteq \{(x_1,\dots,x_j):j\in \supp(v)\cup \{n\}\}$. The equality follows by
Lemma~\ref{supplemma}. Finally, by using
(\ref{e5}), we complete the proof.
\end{proof}

We now pass to the case $\depth(S/I)>0$ which is equivalent to the inequality $x_nu<_{\lex}x_1v.$ In particular, this
implies that $\deg_{x_l}(u)=1.$ Let
$u=x_1x_{l}^{a_l}\cdots x_n^{a_n}$ with $l\geq 2$ and $a_l >0$. The inequality $x_nu<_{\lex}x_1v$ is equivalent to
$x_l^{a_l}\cdots
x_n^{a_n+1}<_{\lex}x_q^{b_q}\cdots x_n^{b_n}.$ Therefore we have $l\geq q.$ For the next result we introduce the
following notation. For $2\leq j,t\leq n$ such that $2\leq j\leq t-2$, we denote
$P_{j,t}=(x_2,\dots,x_j,x_t,\dots,x_n)$.

\begin{Proposition}
\label{assdepth1}
Let $I=(L(u,v))$ be a lexsegment ideal with $x_1\nmid v$ and such that $\depth(S/I)>0.$
\begin{itemize}
 \item [(i)] Let $\depth(S/I)=1$. Then,
\begin{itemize}	
\item[(a)] for $a_l<d-1,$ we have
\[
\Ass(S/I)=\{(x_2,\ldots,x_n)\}
\union\{(x_1,\ldots,x_j) : j\in \supp(v)\setminus\{n\}\}\union
\]
\[\union\{P_{j,l}:j\in \supp(v), j\leq l-2 \}
\union\{P_{j,l+1}:j\in \supp(v),j\leq l-1 \};
\]
\item [(b)] for $a_l=d-1,$ we have\\$\Ass(S/I)=\{(x_2,\ldots,x_n)\}$
$\union \{(x_1,\ldots,x_j) : j\in \supp(v)\setminus\{n\}\}\union$\[\union\{P_{j,l}:j\in \supp(v),j\leq l-2
\}.
\]
\end{itemize}
\item [(ii)] Let $\depth(S/I)>1$. Then
\begin{itemize}	
\item[(a)] for $a_l<d-1,$ we have $\Ass(S/I)=$
\[\{(x_1,\ldots,x_j) : j\in \supp(v)\setminus\{n\}\}\union
\{P_{j,l}:j\in \supp(v) \}
\union\{P_{j,l+1}:j\in \supp(v) \};\]

\item [(b)] for $a_l=d-1,$ we have
\[\Ass(S/I)=\{(x_1,\ldots,x_j) : j\in \supp(v)\setminus\{n\}\}\union\{P_{j,l}:j\in \supp(v)\}
.\]
\end{itemize}
\end{itemize}
\end{Proposition}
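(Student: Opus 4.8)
The plan is to mimic the inductive/exact-sequence strategy already used in Proposition~\ref{assdepth0}, but now exploiting the extra structure coming from $u = x_1 x_l^{a_l}\cdots x_n^{a_n}$ with $l \geq q$ and from the hypothesis $x_n u <_{\lex} x_1 v$ (equivalently $\depth(S/I)>0$). As in the depth-zero case, since $u \neq x_1^d$ we have $I = (I,x_1) \cap (I:x_1)$ (note $a_1=1$ here, since $\deg_{x_1}u=1$), giving the exact sequence
\begin{equation*}
0\longrightarrow S/I\longrightarrow S/(I,x_1)\oplus S/(I:x_1)\longrightarrow S/((x_1)+(I:x_1))\longrightarrow 0.
\end{equation*}
The first summand is governed by an initial lexsegment ideal in $S'=K[x_2,\dots,x_n]$: one checks that $(I,x_1) = (x_1) + (L^i(v))S'$-type ideal, so by Proposition~\ref{assinitial}, $\Ass(S/(I,x_1)) = \{(x_1,\dots,x_j): j\in\supp(v)\cup\{n\}\}$; however one must be careful that $(x_1,\dots,x_n)$ itself appears here only when $\depth(S/I)=1$ forces it — actually it always appears from the initial-ideal computation, so the real work is to show $\mathfrak m \notin \Ass(S/I)$ when $\depth(S/I)>1$, which is automatic from the definition of depth. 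So the nontrivial part is analyzing $\Ass(S/(I:x_1))$ and $\Ass(S/((x_1)+(I:x_1)))$, and this is where the primes $P_{j,l}$ and $P_{j,l+1}$ must come from.

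**Next I would** compute $(I:x_1)$ explicitly. Since $x_1 | u$, the colon ideal $(I:x_1)$ is generated by $\{w/\gcd(w,x_1): w\in L(u,v)\} = \{w/x_1 : w\in L(u,v), x_1|w\} \cup \{w : w\in L(u,v), x_1\nmid w\}$. The monomials $w \in L(u,v)$ divisible by $x_1$ are exactly those with $u \geq_{\lex} w \geq_{\lex} x_1 x_n^{d-1}$, and dividing by $x_1$ gives all degree-$(d-1)$ monomials from $u/x_1 = x_l^{a_l}\cdots x_n^{a_n}$ down to $x_n^{d-1}$ — a \emph{final} lexsegment set in degree $d-1$. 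The monomials $w\in L(u,v)$ not divisible by $x_1$ form an initial-type segment $[x_2^{d-1}x_n \text{-ish} \dots, v]$ in the variables $x_2,\dots,x_n$. So $(I:x_1)$ is a sum of (essentially) a final lexsegment ideal generated by $x_l^{a_l}\cdots x_n^{a_n}$ and a lexsegment ideal in $K[x_2,\dots,x_n]$ with right end $v$. The associated primes of the final lexsegment piece, by the shape of Proposition~\ref{assfinal}, contribute $(x_l,\dots,x_n)$ (and $(x_{l-1},\dots,x_n)$ when $a_l = d-1$, explaining the dichotomy between (a) and (b)); combining with the support-$v$ variables $x_2,\dots,x_j$ via the sum of ideals and the primary-decomposition behavior of sums of monomial ideals in disjoint-ish variable sets produces precisely the primes $P_{j,l} = (x_2,\dots,x_j,x_l,\dots,x_n)$ and $P_{j,l+1}$. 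The constraints $j \leq l-2$ versus $j \leq l-1$ are exactly the conditions making these legitimate primes of the form in the statement (i.e. $j \leq t-2$ in the notation $P_{j,t}$): for $t=l$ we need $j\leq l-2$, for $t=l+1$ we need $j\leq l-1$.

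**Then, for the $\Ass$ of the amalgamated quotient** $S/((x_1)+(I:x_1))$, I expect — as in the depth-zero proof — to show it is either $\mathfrak m$-primary or has a controlled, already-accounted-for associated prime set, so that the exact sequence yields $\Ass(S/I) \subseteq \Ass(S/(I,x_1)) \cup \Ass(S/(I:x_1))$ with the reverse containments supplied by Lemma~\ref{supplemma} (for the $(x_1,\dots,x_j)$ family), by a direct witness-monomial computation for each $P_{j,l}$ and $P_{j,l+1}$ (exhibit $w \notin I$ with $(I:w) = P_{j,l}$, likely $w$ built from $v$ by removing the support entry at $j$ and padding with high-index variables, analogous to the $w = (v/x_j)x_n^{d-b_n}$ used in Lemma~\ref{supplemma}), and by the depth statement itself for whether $\mathfrak m$ or $(x_2,\dots,x_n)$ belongs. \textbf{The main obstacle} will be the bookkeeping in the witness-monomial computations showing that $(I:w)$ is \emph{exactly} $P_{j,l}$ (or $P_{j,l+1}$) and not something larger: one must verify both that each generator of $P_{j,l}$ multiplies $w$ into $I$ (using $\depth(S/I)>0$, i.e. $x_n u <_{\lex} x_1 v$, to push products back above $v$ in lex order) and that no monomial supported on the complementary variables $\{j+1,\dots,l-1\}$ does so — this is a delicate lex-order argument of the same flavor as, but more intricate than, the one in Lemma~\ref{supplemma}, and the split into cases (i)/(ii) and (a)/(b) is essentially forced by which padding monomials remain outside $I$. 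A secondary subtlety is correctly handling the case $\depth(S/I)=1$ versus $>1$: in the former, $(x_2,\dots,x_n)$ is a genuine extra associated prime (coming from the $(I:x_1)$ side, since then $(I:x_1)$ is contained in some power of $(x_2,\dots,x_n)$-saturation boundary), whereas for $\depth > 1$ it drops out, and one must check this against the explicit colon-ideal description.
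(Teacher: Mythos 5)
Your outline follows essentially the same route as the paper: pass to the Mayer--Vietoris exact sequence $0\to S/I\to S/(I,x_1)\oplus S/(I:x_1)\to S/((x_1)+(I:x_1))\to 0$ (noting $a_1=1$ is forced by $\depth(S/I)>0$), recognize $(I,x_1)=(x_1)+(L^i(v))$ in $K[x_2,\ldots,x_n]$ and feed it into Proposition~\ref{assinitial}, decompose $(I:x_1)=J+L$ with $J=(L^f(u/x_1))\subseteq K[x_l,\ldots,x_n]$ in degree $d-1$ and $L=(L^i(v))\subseteq K[x_2,\ldots,x_n]$ in degree $d$, intersect primary decompositions to bound $\Ass(S/(I:x_1))$ from above, and close the reverse inclusions with Lemma~\ref{supplemma} for the $(x_1,\ldots,x_j)$ family together with explicit witness monomials $f$ with $I:f=P_{j,l}$, $P_{j,l+1}$, or $(x_2,\ldots,x_n)$. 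That is exactly what the paper does.

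However, the parenthetical explanation of the $(a)$ versus $(b)$ dichotomy is wrong in both the index and the condition, and if carried through it would give the wrong answer. Applying Proposition~\ref{assfinal} to $J=(L^f(u/x_1))$ inside $K[x_l,\ldots,x_n]$ gives $\Ass=\{(x_l,\ldots,x_n),(x_{l+1},\ldots,x_n)\}$ precisely when $u/x_1\neq x_l^{d-1}$, i.e.\ when $a_l<d-1$ (this is the analogue of the hypothesis $u\neq x_1^d$ in that proposition); when $a_l=d-1$ one has $J=(x_l,\ldots,x_n)^{d-1}$, which is primary, so the \emph{second} prime $(x_{l+1},\ldots,x_n)$ \emph{drops out} and with it the whole family $\{P_{j,l+1}\}$. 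There is no $(x_{l-1},\ldots,x_n)$ in either case. A further gap is that the plan defers the entire witness-monomial content, which in the paper is the real work: one must build $f$ out of a hybrid of $v$ (low variables) and $u$ (high variables), padded by a high power of $x_{l-1}$ or $x_l,x_{l+1}$, and verify by a lex-order argument that $f>_{\lex}u$ yet $x_sf\in I$ for every $s$ in the target prime, with separate witnesses for $P_{j,l}$, for $P_{j,l+1}$ (split according to whether $u=x_1x_l^{a_l}x_{l+1}^{d-a_l-1}$ or not), and for $(x_2,\ldots,x_n)$ (split according to $l=2$ versus $l\geq 3$ and the size of $v$). The presence or absence of $(x_2,\ldots,x_n)$ is not something to be "checked against the colon-ideal description" informally; it is governed by the explicit characterization of $\depth(S/I)\in\{1,>1\}$ from \cite[Proposition 3.4]{EOS}, which you would need to invoke to justify why the $(x_2,\ldots,x_n)$ witness exists exactly when $\depth(S/I)=1$, and why in case (ii) the constraints $j\leq l-2$, $j\leq l-1$ become vacuous (there $v=x_2^{d-1}x_j$ with $l\geq j+2$).
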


\begin{proof}
Since $\depth(S/I)>0$, we have $\mathfrak{m}\notin \Ass(S/I)$ and $a_1=1$, then $(I:x_1)\subseteq (x_2,\ldots,x_n)$.
Hence, $\mathfrak{m}\notin \Ass(S/(I:x_1))$ from the exact sequence (\ref{e1}), where $a_1=1$, we get
$$\Ass(S/I)\subseteq
(\Ass(S/(I,x_1))\setminus \{\mathfrak{m}\})\union \Ass(S/(I:x_1)).$$ As in the the proof of
Proposition~\ref{assdepth0}, we have
$$\Ass(S/(I,x_1))\setminus \{\mathfrak{m}\}=\{(x_1,\dots,x_j):j\in \supp(v)\setminus \{n\}\}.$$ Let us first look at
$\Ass(S/(I:x_1))$. Note that
$(I:x_1)=J+L$ where $J$ is generated in degree $d-1$ by the final lexsegment $L^f(u/x_1)$, and $L$ is generated in
degree $d$ by the initial
lexsegment $L^i(v)\subseteq S'=K[x_2,\dots,x_n]$. Let us first consider $a_l<d-1$. Then, by
Proposition~\ref{assfinal}, the associated primes
of $J$ are
$P_1=(x_{l},\dots,x_n)$ and $P_2=(x_{l+1},\dots,x_n)$. Therefore, $J=Q_1\cap Q_2$, where $Q_1$ and $Q_2$ are primary
monomial ideals with
$\sqrt{Q_i}=P_i$, $i=1,2$. Similarly, we have $L=\bigcap\limits_{2\leq j\in \supp(v)\cup \{n\}}\!\!\!\!\!\!\!Q_j'$ for
some monomial primary ideals
$Q_j'$ such that $\sqrt{Q_j'}=(x_2,\dots,x_j)$ for all $j$. Then $$(I:x_1)=Q_1\cap
Q_2+\bigcap\limits_{j}Q_j'=(\bigcap\limits_j(Q_1+Q_j'))\bigcap
(\bigcap\limits_j(Q_2+Q_j'))$$ is a primary decomposition of $I:x_1$. Therefore, by the primary
decomposition of $I:x_1$ and $\mathfrak{m}\notin \Ass(S/(I:x_1))$, we get
\begin{multline*}
\Ass(S/(I:x_1))\subseteq \{(x_2,\dots,x_n)\}\cup \{P_{j,l}:j\in \supp(v),j\leq l-2\}\union\\ \union\{P_{j,l+1}:j\in
\supp(v),j\leq l-1\}.
\end{multline*}
If $a_l=d-1$, that is $u=x_1x_l^{d-1}$, then we get that $J=(x_l,\dots,x_n)^{d-1}$, hence it is a primary ideal. As
before, we get
$$\Ass(S/(I:x_1))\subseteq \{(x_2,\dots,x_n)\}\cup \{P_{j,l}:j\in \supp(v),j\leq l-2\}.$$

In order to prove (i), taking into account Lemma~\ref{supplemma}, we only need to show that $P_{j,l},j\leq l-2,$
$P_{j,l+1},j\leq l-1$, and $(x_2,\ldots,x_n)$ are associated primes of $I$. In each case, we are going to show that
one may find a monomial $f\notin
I$ such that $I:f=P_{j,l}$ or
$P_{j,l+1}$ or $(x_2,\ldots,x_n)$. We begin by proving that $(x_2,\ldots,x_n)$ is an associated prime of $I.$ By
\cite[Proposition 3.4]{EOS},
$\depth(S/I)=1$ if and only if $v=x_2^{d-1}x_j$ for some $2\leq j\leq n-2$ and $j\geq l-1$ or
$v\leq_{\lex}x_2^{d-1}x_{n-1}.$ If $v\leq_{\lex}
x_2^{d-1}x_n,$ then,
for $f=x_2^{d-1},$ we easily get $I:f=(x_2,\ldots,x_n)$ since all the monomials $x_2^{d}, x_2^{d-1}x_3,\ldots,
x_2^{d-1}x_{n}$ belong to $I.$ Let $v\geq_{\lex}x_2^{d-1}x_{n-1}.$ If $l=2,$ then we choose $f=x_1x_{n}^{d-2}$ and
observe that $x_1x_2x_n^{d-2},x_1x_3x_n^{d-2}\ldots,x_1x_n^{d-1}\in I,$ hence $I:f=(x_2,\ldots,x_n).$ Finally, for
$l\geq 3,$ we
take $f=x_1x_2^{d-1}x_n^{d-2}$ and get again the desired claim since
$x_1x_lx_n^{d-2},x_1x_{l+1}x_n^{d-2}\ldots,x_1x_n^{d-1},x_2^d,x_2^{d-1}x_3,\ldots,x_2^{d-1}x_{l-1}\in I.$  Therefore,
$(x_2,\ldots,x_n)\in \Ass(S/I)$ for $\depth(S/I)=1.$ Now let $j\in \supp(u)$ with $j\leq l-2$, we look for a monomial
$f\notin I$ such that $I:f=P_{j,l}, j\leq l-2$. Let us take $$f=x_1x_q^{b_q}\cdots
x_{j-1}^{b_{j-1}}x_j^{b_j-1}\cdots x_{l-2}^{b_{l-2}}x_{l-1}^d x_{l}^{a_l-1}x_{l+1}^{a_{l+1}}\cdots x_n^{a_n}.$$
As $j\in \supp(v)$ and $j\leq l-2$, we have $q\leq j\leq l-2<l$, then $f>_{lex}u$. Hence $f\notin I$.
We now show that
$I:f=P_{j,l}$. Let $s\in \{2,\dots,j,l,\dots,n\}.$ If $s\leq j$, then $x_sf=x_s(v'/x_j)m_1$, where $v'=x_q^{b_q}\cdots
x_j^{b_j}\cdots
x_{l-2}^{b_{l-2}}x_{l-1}^{d-(b_q+\cdots +b_{l-2})}\geq_{lex}v$ and $m_1$ is a monomial in $S$. Since $x_s(v'/x_j)\in
L(u,v)$, we get $x_s f\in I$. Let
$s\geq l$. Then
$x_s f=x_s(u/x_l) m_2$ for some monomial $m_2$, and since $x_s(u/x_l)\in L(u,v)$, we obtain $x_s f\in I$. We thus
showed that $P_{j,l}\subseteq
I:f$ for $j\leq l-2$.
Let us assume that $P_{j,l}\subsetneqq I:f$, hence there exists a monomial $w\in I:f$ such that $\supp(w)\subseteq
\{j+1,\dots,l-1\}$, that is
$w=x_{j+1}^{c_{j+1}}\dots x_{l-1}^{c_{l-1}}$, where $c_{j+1},\ldots,c_{l-1}\geq 0$. But $$w f=x_1x_q^{b_q}\cdots
x_j^{b_j-1}x_{j+1}^{c_{j+1}'}\cdots
x_{l-1}^{c_{l-1}'}x_l^{a_l-1}x_{l+1}^{a_{l+1}}\cdots x_n^{a_n},$$ and, with same arguments as above, $w f\notin I$.
Therefore, $I:f=P_{j,l}$.

Now, let $a_l<d-1$.
We show that $P_{j,l+1}\in \Ass(S/I)$ for $j\leq l-1$. If $u=x_1x_l^{a_l}x_{l+1}^{d-a_l-1}$, we take
$f=x_1x_q^{b_q}\cdots x_{j-1}^{b_j-1}\cdots
x_{l-1}^{b_{l-1}}x_{l}x_{l+1}^{d-a_l-2}$. If $u<_{lex}x_1 x_l^{a_l}x_{l+1}^{d-{a_l}-1}$, we take
$$f=x_1x_q^{b_q}\cdots
x_{j-1}^{b_{j-1}}x_j^{b_j-1}\cdots x_{l-2}^{b_{l-2}}x_{l-1}^{b_{l-1}}x_{l}^{d}x_{l+1}^{d-a_{l}-1}.$$ With similar
arguments as before, we show
that $I:f=P_{j,l+1}$ in each case.

 (ii). By \cite[Proposition 3.4]{EOS}, $\depth(S/I)>1$ if and only if $v=x_2^{d-1}x_j$, for some $2\leq j\leq
n-2$ and $l\geq j+2$. In this case $(x_2,\dots,x_n)\notin \Ass(S/I)$ and the conclusion follows directly from
Lemma~\ref{supplemma} and by looking at
$\Ass(S/(I:x_1))$. Indeed, we have the exact sequence $$0\longrightarrow S/(I:x_1)\longrightarrow S/I\longrightarrow
S/(I,x_1)\longrightarrow 0,$$ thus $\Ass(S/(I:x_1))\subseteq \Ass(S/I)\subseteq \Ass(S/(I:x_1))\cup \Ass(S/(I,x_1))$.
As $(x_1,\dots,x_j)\in \Ass(S/I)$ for all $j\in \supp(v),\,j\neq n,$ we only need to compute $\Ass(S/(I:x_1)).$ Note
that, in this case,
\[(I:x_1)=\left\{
  \begin{array}{ll}
   (L^f(u/x_1))+(x_2^d) , & \hbox{if} \ \ v=x_2^d, \\
    (L^f(u/x_1))+(x_2^{d-1})\cap (x_2^d,x_3,\dots,x_j), & \hbox{if} \ v=x_2^{d-1}x_j,\\ & \ 3\leq j\leq n-2,
  \end{array}
\right.
\]
If $v=x_2^d,$ we get, by using Proposition~\ref{assfinal}, $(I:x_1)=(x_2^d,Q_1)\cap (x_2^d,Q_2)$ where $Q_1,Q_2$ are
primary ideals with $\sqrt{Q_1}=(x_l,\dots,x_n)$ and $\sqrt{Q_2}=(x_{l+1},\dots,x_n)$, which implies that
$\Ass(S/(I:x_1))=\{P_{2,l},P_{2,l+1}\}.$ Finally, if $v=x_2^{d-1}x_j$, with $3\leq j\leq n-2$, we get, by using
Proposition~\ref{assfinal}, $$(I:x_1)=(x_2^{d-1},Q_1)\cap (x_2^{d-1},Q_2)\cap (x_2^d,x_3,\dots,x_j,Q_1)\cap
(x_2^d,x_3,\dots,x_j,Q_2),$$ where $Q_1,Q_2$ are primary and $\sqrt{Q_1}=(x_l,\dots,x_n)$,
$\sqrt{Q_2}=(x_{l+1},\dots,x_n)$. This yields $\Ass(S/(I:x_1))=\{P_{j,l},P_{j,l+1}:j\in \supp(v)\}.$
\end{proof}

\section{Lexsegment ideals are pretty clean}
\label{prettysection}

Pretty clean modules were defined in \cite{HP}. Since we are interested in  finitely generated multigraded modules
over $S,$ we recall the definition
of pretty cleanness in this frame.

\begin{Definition}[\cite{HP}]
Let $M$ be a finitely generated multigraded $S$-module. A multigraded prime filtration of $M$,
\[
\mathcal{F} :  \quad 0=M_0\subseteq M_1 \subseteq \cdots \subseteq M_{r-1}\subseteq M_r=M,
\]
where $M_i/M_{i-1}\cong S/P_i,$ with $P_i$ a monomial prime ideal, is called \textbf{pretty clean} if for all $i<j$,
$P_i\subseteq P_j$ implies
$i=j.$ In other words, a proper inclusion $P_i\subseteq P_j$ is possible only if $i>j.$ A multigraded $S$-module is
called \textbf{pretty clean} if it
admits a pretty clean filtration.
\end{Definition}

We denote by $\Supp(\mathcal{F})$ the set $\{P_1,\ldots,P_r\}$ of the prime ideals  which define the factor modules of
$\mathcal{F}.$
By \cite[Corollary 3.4.]{HP}, $\Supp(\mathcal{F})=\Ass(S/I).$

The following lemma gives a nice class of pretty clean multigraded $S$-modules.

\begin{Lemma}
\label{totalordered}
Let $M$ be a finitely generated multigraded $S$-module such that $\Ass(M)$ is totally ordered by inclusion. Then $M$
is pretty clean.
\end{Lemma}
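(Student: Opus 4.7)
The plan is to argue by induction on $r := |\Ass(M)|$, constructing a pretty clean filtration of $M$ by concatenating filtrations of two pieces coming from a $P$-torsion decomposition, where $P$ denotes the unique maximal element of the totally ordered set $\Ass(M)$.

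For the inductive step ($r \geq 2$), I let $P$ be the maximum of $\Ass(M)$ and consider the $P$-torsion submodule
\[
N := \Gamma_P(M) = \bigcup_{k \geq 1}(0 :_M P^k).
\]
Standard facts about associated primes give $\Ass(N) = \Ass(M) \cap V(P) = \{P\}$, since $P$ is the largest element of $\Ass(M)$, and $\Ass(M/N) = \Ass(M) \setminus V(P) = \Ass(M) \setminus \{P\}$, which remains totally ordered by inclusion. The base case (see below) applied to $N$ and the inductive hypothesis applied to $M/N$ produce pretty clean filtrations for both. Pulling the filtration of $M/N$ back to $M$ along the projection $M \twoheadrightarrow M/N$ and prepending the filtration of $N$ yields a candidate filtration
\[
0 = M_0 \subset \cdots \subset M_s = N \subset M_{s+1} \subset \cdots \subset M_r = M.
\]

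The base case $r = 1$, with $\Ass(M) = \{P\}$, amounts to showing that the $P$-coprimary finitely generated multigraded module $M$ admits a filtration every one of whose factors is isomorphic (up to degree shift) to $S/P$; such a filtration is automatically pretty clean, since all the primes coincide. Since $P^k M = 0$ for some $k$, I would iteratively extract multigraded elements $m \in M$ with $\Ann(m) = P$ (which exist because $P \in \Ass(M)$), each contributing a factor $Sm \cong S/P$ up to shift.

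Pretty cleanness of the concatenated filtration then follows by a direct check: for $i \leq s$ one has $P_i = P$, while for $j > s$ one has $P_j \in \Ass(M) \setminus \{P\}$ and hence $P_j \subsetneq P$. Therefore, if $i < j$ with $i \leq s < j$, then $P_i = P \not\subseteq P_j$, and the within-block conditions follow from the pretty cleanness of the two constituent filtrations. The main obstacle is the base case: justifying the termination of the iteration and controlling $\Ass$ of the successive quotients as copies of $S/P$ are split off. I plan to handle this by reducing to the corresponding statement in Herzog--Popescu~\cite{HP}.
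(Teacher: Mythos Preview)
The paper gives no proof here; it defers entirely to \cite[Proposition~5.1]{HP}. Your outline---induction on $|\Ass(M)|$, peeling off the $P$-torsion submodule for the maximal associated prime $P$, and concatenating filtrations---is exactly the shape of argument one would expect to find there, so in that sense you and the paper are aligned.

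However, the base-case obstacle you identify is not a mere technicality to be outsourced: in the generality of the lemma as stated it is actually insurmountable. Take $S=K[x,y]$ and $M=(x,y)\subset S$. This is a finitely generated multigraded $S$-module with $\Ass(M)=\{(0)\}$, trivially totally ordered. In any multigraded prime filtration of $M$ the first cyclic submodule $Sm$ satisfies $\Ann(m)=(0)$ (since $M$ is torsion-free), so $P_1=(0)$; but $M/Sm$ then has rank zero and is torsion, forcing $P_2\supsetneq(0)$ and violating the pretty-clean condition at the very first step. Thus $M$ is not pretty clean, and your plan to iteratively split off copies of $S/P$ from a $P$-coprimary multigraded module cannot be completed in general---quotienting by a copy of $S/P$ may introduce new, strictly larger associated primes. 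Whatever statement in \cite{HP} the paper is invoking must carry an extra hypothesis (for instance, that $M=S/I$ with $I$ a monomial ideal), and the extension to arbitrary multigraded modules announced here is not justified by that citation alone. For the applications in Section~\ref{prettysection} one really needs to argue pretty cleanness of the specific subquotients that arise, rather than appeal to this lemma in full generality.
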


The proof works as the proof of \cite[Proposition 5.1]{HP}, therefore we omit it.

Our aim in this section is to show that if $I\subseteq S$ is a lexsegment ideal, then $S/I$ is pretty clean. The claim
is obvious for initial and
final lexsegment ideals. Indeed, by applying  Proposition~\ref{assinitial}, Proposition~\ref{assfinal}, and the above
lemma, we get

\begin{Corollary}
\label{initial}
Let $I\subseteq S$ be an initial or final lexsegment ideal. Then $S/I$ is pretty clean.
\end{Corollary}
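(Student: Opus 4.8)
The plan is to combine the two associated-prime computations just obtained with Lemma~\ref{totalordered}: for an initial or a final lexsegment ideal $I$, the set $\Ass(S/I)$ determined in Proposition~\ref{assinitial} (initial case) or Proposition~\ref{assfinal} (final case) is totally ordered by inclusion, and pretty cleanness then follows at once.

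First I would dispose of the standard reductions recorded at the start of this section. For an initial lexsegment ideal $I=(L^i(v))$ one may assume $v=x_q^{b_q}\cdots x_n^{b_n}$ with $q\geq 2$, and the excluded case $I=(L(x_1^d,x_n^d))=\mathfrak{m}^d$ is harmless, since then $\Ass(S/I)=\{\mathfrak{m}\}$ is a one-element chain. For a final lexsegment ideal $I=(L^f(u))$ one may assume $x_1\mid u$: if $x_1\nmid u$ then $I$ is extended from the subring $K[x_{\min(u)},\ldots,x_n]$, so a multigraded prime filtration there yields, after base change, a multigraded prime filtration of $S/I$ over $S$ whose factor primes are the extensions of the original ones, with the same inclusion pattern; and $u=x_1^d$ gives $\Ass(S/I)=\{(x_1)\}$, again a chain. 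So it suffices to treat the two cases exactly as stated in Propositions~\ref{assinitial} and~\ref{assfinal}.

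In the initial case, Proposition~\ref{assinitial} gives $\Ass(S/I)=\{(x_1,\ldots,x_j): j\in\supp(v)\cup\{n\}\}$, and any two ideals of the form $(x_1,\ldots,x_j)$ and $(x_1,\ldots,x_{j'})$ are comparable under inclusion, so $\Ass(S/I)$ is totally ordered. In the final case, with $x_1\mid u$ and $u\neq x_1^d$, Proposition~\ref{assfinal} gives $\Ass(S/I)=\{(x_1,\ldots,x_n),(x_2,\ldots,x_n)\}$ with $(x_2,\ldots,x_n)\subsetneq(x_1,\ldots,x_n)$, again a chain. Applying Lemma~\ref{totalordered} in each case shows that $S/I$ is pretty clean. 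I expect essentially no obstacle here: the substantive work lies entirely in Propositions~\ref{assinitial} and~\ref{assfinal}, and all that remains is the trivial observation that the primes listed there form chains, together with the routine base-change remark for final lexsegments not divisible by $x_1$.
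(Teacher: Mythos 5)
Your proposal is correct and follows the same route as the paper: invoke Propositions~\ref{assinitial} and~\ref{assfinal} to see that $\Ass(S/I)$ is totally ordered by inclusion, then apply Lemma~\ref{totalordered}. One small slip: for a final lexsegment with $u=x_1^d$ one has $I=(L(x_1^d,x_n^d))=\mathfrak{m}^d$, so $\Ass(S/I)=\{\mathfrak{m}\}$ rather than $\{(x_1)\}$, but this does not affect the argument since it is still a one-element chain.
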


For arbitrary lexsegment ideals  we need  another preparatory result.

\begin{Lemma}
\label{exact sequence}

Let $0\to M^\prime \stackrel{f}{\rightarrow} M \stackrel{g}{\rightarrow} M^{\prime\prime} \to 0$ be an exact sequence
of finitely generated
multigraded $S$-modules and homogeneous morfisms.
We assume that $M^\prime$ has a multigraded pretty clean
filtration $\mathcal{F^\prime}$ and $M^{\prime\prime}$ has a  multigraded pretty clean filtration
$\mathcal{F}^{\prime\prime}$ such that for any $P\in
\Supp(\mathcal{F^\prime})$ and $Q\in \Supp(\mathcal{F}^{\prime\prime})$, we have $P\not\subseteq Q,$ that is either
$P\supseteq Q$ or $P$ and $Q$ are
incomparable by inclusion.  Then $M$ is pretty clean.
\end{Lemma}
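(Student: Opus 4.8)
The plan is to splice the two given filtrations into a single multigraded prime filtration of $M$ and then verify directly that the result is pretty clean. First, since $f$ is injective, identify $M^\prime$ with the multigraded submodule $f(M^\prime)$ of $M$; by exactness $\ker g=f(M^\prime)$, and $g$ induces a multigraded isomorphism $M/f(M^\prime)\cong M^{\prime\prime}$. Write the given filtrations as
\[
\mathcal{F}^\prime:\quad 0=M^\prime_0\subseteq M^\prime_1\subseteq\cdots\subseteq M^\prime_s=M^\prime,\qquad M^\prime_i/M^\prime_{i-1}\cong S/P_i,
\]
\[
\mathcal{F}^{\prime\prime}:\quad 0=M^{\prime\prime}_0\subseteq M^{\prime\prime}_1\subseteq\cdots\subseteq M^{\prime\prime}_t=M^{\prime\prime},\qquad M^{\prime\prime}_j/M^{\prime\prime}_{j-1}\cong S/Q_j,
\]
so that $\Supp(\mathcal{F}^\prime)=\{P_1,\dots,P_s\}$ and $\Supp(\mathcal{F}^{\prime\prime})=\{Q_1,\dots,Q_t\}$.

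Next I would pull the filtration $\mathcal{F}^{\prime\prime}$ back along $g$: for $0\leq j\leq t$ set $N_j=g^{-1}(M^{\prime\prime}_j)$. Since $g$ is a homogeneous epimorphism with kernel $f(M^\prime)$, each $N_j$ is a multigraded submodule of $M$, one has $N_0=f(M^\prime)$, $N_t=M$, $N_{j-1}\subseteq N_j$, and $g$ induces a multigraded isomorphism $N_j/N_{j-1}\cong M^{\prime\prime}_j/M^{\prime\prime}_{j-1}\cong S/Q_j$. Concatenating this chain with $\mathcal{F}^\prime$ at the point $M^\prime=f(M^\prime)=N_0$ produces a multigraded prime filtration
\[
\mathcal{F}:\quad 0=M^\prime_0\subseteq\cdots\subseteq M^\prime_s=N_0\subseteq N_1\subseteq\cdots\subseteq N_t=M,
\]
whose sequence of factor primes is $P_1,\dots,P_s,Q_1,\dots,Q_t$, in this order.

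It remains to check that $\mathcal{F}$ is pretty clean, i.e.\ that whenever a factor prime $R$ occurs strictly before a factor prime $R^\prime$, one cannot have a proper inclusion $R\subsetneq R^\prime$. There are three cases according to which blocks $R$ and $R^\prime$ belong to. If both lie in the $P$-block, the required condition is exactly the pretty cleanness of $\mathcal{F}^\prime$; if both lie in the $Q$-block, it is the pretty cleanness of $\mathcal{F}^{\prime\prime}$. In the only remaining case $R=P_i$ precedes $R^\prime=Q_j$, so $R\in\Supp(\mathcal{F}^\prime)$ and $R^\prime\in\Supp(\mathcal{F}^{\prime\prime})$, and the hypothesis gives $P_i\not\subseteq Q_j$, making the condition vacuously true. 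Hence $\mathcal{F}$ is a pretty clean multigraded filtration of $M$, so $M$ is pretty clean.

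I do not expect a genuine obstacle here; the only points that need care are the standard facts that the pull-back $N_j=g^{-1}(M^{\prime\prime}_j)$ yields multigraded factors isomorphic to $S/Q_j$, and that the splice occurs precisely at $f(M^\prime)=N_0$, so that the cross-block prime comparisons are exactly the ones ruled out by the hypothesis $P\not\subseteq Q$.
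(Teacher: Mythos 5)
Your proof is correct and follows essentially the same approach as the paper: push forward $\mathcal{F}^\prime$ along $f$, pull back $\mathcal{F}^{\prime\prime}$ along $g$, and splice the two chains at $f(M^\prime)=g^{-1}(0)$. You simply spell out the case-check for pretty cleanness that the paper leaves implicit.
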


\begin{proof}
Let $\mathcal{F}^\prime: 0=M_0^\prime\subseteq \cdots \subseteq M^\prime_r=M^\prime $ be the filtration of $M^\prime$
and
$\mathcal{F}^{\prime\prime}:  0=M_0^{\prime\prime}\subseteq \cdots \subseteq M^{\prime\prime}_s=M^{\prime\prime} $ the
filtration of
$M^{\prime\prime}.$ Then, by hypothesis, the following filtration,
\[
0=f(M^\prime_0)\subseteq \cdots \subseteq f(M^\prime_r)=f(M^\prime)=g^{-1}(0)\subseteq \cdots \subseteq
g^{-1}(M^{\prime\prime}_s)=M
\]
is a multigraded prime filtration of $M,$ hence $M$ is pretty clean.
\end{proof}

The first consequence that one  derives from the above lemma is that we can reduce, as in the previous section, to the
case when $v,$ the right
end of the lexsegment set which generates the lexsegment ideal, is not divisible by $x_1.$ Indeed, if
$\deg_{x_1}(v)=b_1>0,$ looking at the exact
sequence (\ref{x1dividesv}), we see that, in order to prove that $S/I$ is pretty clean, it is enough  to show that
$S/(I: x_1^{b_1})$ is pretty clean
since $\Ass(S/(I: x_1^{b_1}))$ obviously does not contain $(x_1).$

\begin{Theorem}
\label{pretty}
Let $I\subseteq S$ be a lexsegment ideal. Then $S/I$ is a pretty clean module.
\end{Theorem}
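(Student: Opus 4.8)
The plan is to reduce to the cases already handled and then to assemble the general case from an exact sequence, using Lemma~\ref{exact sequence} as the gluing tool. As noted just before the statement, we may assume $x_1 \nmid v$ (otherwise pass to $S/(I:x_1^{b_1})$ via the sequence~(\ref{x1dividesv})). Next, by Corollary~\ref{initial} we are done when $I$ is initial or final, so assume $I = (L(u,v))$ is neither, with $x_1 \mid u$, $u \neq x_1^d$, and $x_1 \nmid v$.

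First I would split according to $\depth(S/I)$, exactly as in Section~\ref{asssection}. If $\depth(S/I) = 0$, use the exact sequence~(\ref{e1}):
\[
0 \to S/I \to S/(I,x_1^{a_1}) \oplus S/(I:x_1^{a_1}) \to S/((I,x_1^{a_1})+(I:x_1^{a_1})) \to 0.
\]
The middle and right modules are pretty clean: $S/(I:x_1^{a_1})$ has $\Ass = \{(x_2,\dots,x_n)\}$ (a single prime, hence trivially pretty clean), $S/((I,x_1^{a_1})+(I:x_1^{a_1}))$ is $\mathfrak m$-primary (again a single associated prime), and $S/(I,x_1^{a_1})$ is pretty clean because it is built from $S'/(L^i(v))$ with $S' = K[x_2,\dots,x_n]$, which is pretty clean by Corollary~\ref{initial} — one lifts a pretty clean filtration of $S'/(L^i(v))$ through the chain $(I,x_1) \subseteq (I,x_1^{a_1})$ (the intermediate quotient $(I,x_1)/(I,x_1^{a_1})$ is $\mathfrak m$-primary, as shown around~(\ref{e3})). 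Then I would apply Lemma~\ref{exact sequence} twice: once to glue a pretty clean filtration of $S/(I,x_1)$ with the $\mathfrak m$-primary top to get one for $S/(I,x_1^{a_1})$, checking the non-containment hypothesis $P \not\subseteq \mathfrak m$ is vacuous — here one must instead order so that $\mathfrak m$ sits at the \emph{bottom}, i.e. use the sequence $0 \to (I,x_1)/(I,x_1^{a_1}) \to S/(I,x_1^{a_1}) \to S/(I,x_1) \to 0$ with $\Supp$ of the submodule equal to $\{\mathfrak m\}$, which dominates everything — and once more to combine $S/(I,x_1^{a_1})$ with $S/(I:x_1^{a_1})$, again checking the dominance condition from the explicit list in Proposition~\ref{assdepth0}.

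If $\depth(S/I) > 0$, then $a_1 = 1$ and I would use the short exact sequence
\[
0 \to S/(I:x_1) \to S/I \to S/(I,x_1) \to 0.
\]
Here $S/(I,x_1)$ is pretty clean (lifted from the initial lexsegment $S'/(L^i(v))$), and $S/(I:x_1)$ must be shown pretty clean directly: $(I:x_1) = J + L$ where $J = (L^f(u/x_1))$ is a final lexsegment in the $x_l,\dots,x_n$ variables and $L = (L^i(v))$ lives in $x_2,\dots,x_n$, and the explicit primary decomposition given in the proof of Proposition~\ref{assdepth1} shows $\Ass(S/(I:x_1))$ consists of the $P_{j,l}$, $P_{j,l+1}$, and possibly $(x_2,\dots,x_n)$. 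To see $S/(I:x_1)$ is pretty clean, note the primary decomposition $(I:x_1) = \bigcap_j (Q_1 + Q_j') \cap \bigcap_j (Q_2 + Q_j')$ exhibits it as glued from the two final-lexsegment factors (each handled by Corollary~\ref{initial}) over the initial-lexsegment factor $L$; alternatively, since every associated prime of $S/(I:x_1)$ either contains or is incomparable to every associated prime of $S/(I,x_1)$ — the latter being among $\{(x_1,\dots,x_j): j \in \supp(v)\}$, none of which contains any $P_{j,l}$ or $P_{j,l+1}$ (as $x_1 \notin P_{j,t}$) — Lemma~\ref{exact sequence} applies to the displayed sequence once we know both ends are pretty clean.

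The main obstacle is establishing pretty cleanness of the intermediate modules $S/(I,x_1^{a_1})$ and $S/(I:x_1)$ and, crucially, verifying the non-containment hypothesis of Lemma~\ref{exact sequence} at each gluing — i.e. that every prime appearing in the sub behaves correctly (dominates or is incomparable) relative to every prime in the quotient. This is where the explicit determination of $\Ass(S/I)$ from Section~\ref{asssection} is indispensable: the lists in Propositions~\ref{assinitial}–\ref{assdepth1} let one read off that, e.g., $(x_1,\dots,x_j) \not\subseteq P_{j',t}$ and $(x_2,\dots,x_n) \not\subseteq (x_1,\dots,x_j)$, so the gluing order can always be arranged. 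Once all the non-containments are checked case by case (matching the case division $\depth = 0$, $\depth = 1$, $\depth > 1$ and the subcases $a_l < d-1$, $a_l = d-1$), repeated application of Lemma~\ref{exact sequence} finishes the proof.
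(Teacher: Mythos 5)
Your overall plan is the right one (reduce via the colon with $x_1$, split by $\depth(S/I)$, glue pretty clean filtrations along a short exact sequence using Lemma~\ref{exact sequence}), and it does match the global shape of the paper's argument in Lemmas~\ref{depth0pretty} and \ref{depth>0pretty}. But both of your gluing steps misuse Lemma~\ref{exact sequence}, and these are not cosmetic issues.

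In the $\depth(S/I)=0$ case you invoke Lemma~\ref{exact sequence} on sequence~(\ref{e1}),
$0 \to S/I \to S/(I,x_1^{a_1})\oplus S/(I:x_1^{a_1}) \to S/((I,x_1^{a_1})+(I:x_1^{a_1}))\to 0$,
but there $S/I$ is the \emph{submodule}, not the middle term. Lemma~\ref{exact sequence} produces a pretty clean filtration of the middle term only; it says nothing about a submodule of a pretty clean module being pretty clean (and in general it is not). The paper instead applies the lemma to
$0\to (I:x_1^{a_1})/I\to S/I\to S/(I:x_1^{a_1})\to 0$,
where $S/I$ sits in the middle; the submodule $(I:x_1^{a_1})/I$ is annihilated by $x_1^{a_1}$, so every prime in its support contains $x_1$, while $\Ass(S/(I:x_1^{a_1}))=\{(x_2,\dots,x_n)\}$, so the non-containment hypothesis holds. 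The submodule is then handled by Lemma~\ref{totalordered} (its associated primes $\{(x_1,\dots,x_j): j\in\supp(v)\cup\{n\}\}$ are totally ordered), a tool you never use but which is essential here, because $(I:x_1^{a_1})/I$ is not of the form $S/(I,x_1^{a_1})$.

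In the $\depth(S/I)>0$ case you choose the multiplication sequence
$0\to S/(I:x_1)\to S/I\to S/(I,x_1)\to 0$.
This has $S/I$ in the middle, but the containment hypothesis of Lemma~\ref{exact sequence} fails: you would need every $P\in\Ass(S/(I:x_1))$ to satisfy $P\not\subseteq Q$ for every $Q\in\Ass(S/(I,x_1))$, and $\Ass(S/(I,x_1))$ contains $\mathfrak m=(x_1,\dots,x_n)$ (it equals $\{(x_1,\dots,x_j):j\in\supp(v)\cup\{n\}\}$ by Proposition~\ref{assinitial} applied in $K[x_2,\dots,x_n]$), which of course contains every prime. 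Your side remark that $(x_1,\dots,x_j)$ does not contain $P_{j',t}$ ``as $x_1\notin P_{j',t}$'' is the wrong implication (containing $P_{j',t}$ does not require $P_{j',t}$ to contain $x_1$) and in any case does not address $j=n$. The paper avoids this by using the opposite factorisation,
$0\to (I:x_1)/I\to S/I\to S/(I:x_1)\to 0$,
where the sub has $x_1$ in all its associated primes and the quotient does not, so the hypothesis is automatic. Finally, your claim that pretty cleanness of $S/(I:x_1)$ follows ``directly'' from the primary decomposition, or ``is glued from the two final-lexsegment factors over $L$'', is not an argument: a primary decomposition by itself gives no prime filtration. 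The paper handles the delicate subcase ($a_l<d-1$, $q\leq l-1$) by a further exact sequence with $M=(I:x_1):x_l^d$ and checks total ordering of $\Ass(M/(I:x_1))$ and $\Ass(S/M)$ via Lemma~\ref{totalordered}. Incorporating Lemma~\ref{totalordered}, switching both exact sequences so that the $x_1$-torsion part is the submodule, and filling in the $S/(I:x_1)$ step would repair your proof.
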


The proof of the theorem will follow from Corollary~\ref{initial} and the next two lemmas. As in the previous section,
we consider separately the
cases when $\depth(S/I)=0$ and $\depth(S/I)>0.$

\begin{Lemma}
\label{depth0pretty}
Let $I$ be a lexsegment ideal which is neither initial nor final and such that $\depth(S/I)=0$ and $x_1\nmid v.$ Then
$S/I$ is pretty clean.
\end{Lemma}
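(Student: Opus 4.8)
The plan is to mirror the structure of the proof of Proposition~\ref{assdepth0}, replacing each "$\Ass$ equals..." conclusion by the stronger statement "has a pretty clean filtration with the right prime supports", and then glue the pieces together using Lemma~\ref{exact sequence}. Since $u\neq x_1^d$ we have the decomposition $I=(I,x_1^{a_1})\cap(I:x_1^{a_1})$, giving the exact sequence \eqref{e1}
\[
0\longrightarrow S/I\longrightarrow S/(I,x_1^{a_1})\oplus S/(I:x_1^{a_1})\longrightarrow S/((x_1^{a_1})+(I:x_1^{a_1}))\longrightarrow 0 .
\]
**First I would** note that this does not immediately fit the two-term format of Lemma~\ref{exact sequence}, so I would instead work with the two short exact sequences $0\to S/(I:x_1^{a_1})\to S/I\to S/(I,x_1^{a_1})\to 0$ directly, or build the filtration of $S/I$ by hand from a pretty clean filtration of the submodule $S/(I:x_1^{a_1})$ followed by one of the quotient $S/(I,x_1^{a_1})$, checking the pretty-clean incomparability condition across the two pieces.

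**Next**, I would establish pretty cleanness of the two building blocks. For $S/(I:x_1^{a_1})$: as shown in Proposition~\ref{assdepth0}, when $a_1>1$ the ideal $(I:x_1^{a_1})$ contains $(x_2,\dots,x_n)^{d-a_1+1}$ and has $\Ass=\{(x_2,\dots,x_n)\}$, so it is pretty clean by Lemma~\ref{totalordered}; when $a_1=1$ one has $(I:x_1)\supseteq(x_2,\dots,x_n)^d$ again with a single associated prime, so Lemma~\ref{totalordered} applies. For $S/(I,x_1^{a_1})$: when $a_1=1$ this is $S/(x_1)\otimes$ an initial lexsegment ideal in $K[x_2,\dots,x_n]$, which is pretty clean by Corollary~\ref{initial} (a prime filtration there extends to one here by adjoining $x_1$ to every prime). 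When $a_1>1$ I would use the exact sequence \eqref{e3}, $0\to(I,x_1)/(I,x_1^{a_1})\to S/(I,x_1^{a_1})\to S/(I,x_1)\to 0$: the submodule has $\Ass=\{\mathfrak m\}$ (its annihilator contains an $\mathfrak m$-primary ideal), hence is pretty clean with support $\{\mathfrak m\}$, and $\mathfrak m$ is maximal so it cannot be properly contained in any prime of the quotient's support — thus Lemma~\ref{exact sequence} applies to give that $S/(I,x_1^{a_1})$ is pretty clean, with support $\{(x_1,\dots,x_j):j\in\supp(v)\cup\{n\}\}$.

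**The key gluing point**, which I expect to be the main obstacle, is verifying the hypothesis of Lemma~\ref{exact sequence} for the sequence $0\to S/(I:x_1^{a_1})\to S/I\to S/(I,x_1^{a_1})\to 0$: I need that no prime $P\in\Supp(\mathcal F')$ (here $\Supp(\mathcal F')=\{(x_2,\dots,x_n)\}$) is properly contained in any $Q\in\Supp(\mathcal F'')=\{(x_1,\dots,x_j):j\in\supp(v)\cup\{n\}\}$. The only danger is $(x_2,\dots,x_n)\subsetneq(x_1,\dots,x_n)$, which does occur if $\mathfrak m=(x_1,\dots,x_n)$ lies in $\Supp(\mathcal F'')$; but since $\depth(S/I)=0$ forces $\mathfrak m\in\Ass(S/I)$ and we want $\mathfrak m$ to appear in the $S/(I,x_1^{a_1})$ part \emph{before} (i.e. lower in the filtration than) $(x_2,\dots,x_n)$ — which is the correct direction for pretty cleanness, since $\mathfrak m$ is larger. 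So rather than applying Lemma~\ref{exact sequence} blindly, I would order things so that the filtration of $S/I$ begins with the $S/(I,x_1^{a_1})$ piece placed \emph{above} the $S/(I:x_1^{a_1})$ piece: concretely, take a pretty clean filtration of $S/(I:x_1^{a_1})$ (support $\{(x_2,\dots,x_n)\}$) as the bottom, then stack a pretty clean filtration of $S/(I,x_1^{a_1})$ on top, with $\mathfrak m$ occurring near the top. The condition to check is then exactly that every prime in the bottom support is \emph{not} properly contained in any prime occurring \emph{above} it — and the only inclusion among all the relevant primes is $(x_2,\dots,x_n)\subsetneq\mathfrak m$, with $(x_2,\dots,x_n)$ at the bottom and $\mathfrak m$ above, which violates pretty cleanness in that orientation. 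Hence I must instead put the $S/(I,x_1^{a_1})$ block (containing $\mathfrak m$) at the \emph{bottom} and $S/(I:x_1^{a_1})$ on top; this requires the reversed exact sequence $0\to S/(I,x_1^{a_1})\to ? \to S/(I:x_1^{a_1})\to 0$, which is not literally available — so the clean way is to invoke Lemma~\ref{exact sequence} with $M'=S/(I,x_1^{a_1})$ viewed via the map $S/I\twoheadrightarrow S/(I,x_1^{a_1})$ is wrong direction too. **The resolution** I would actually carry out: apply Lemma~\ref{exact sequence} to $0\to S/(I:x_1^{a_1})\xrightarrow{} S/I\to S/(I,x_1^{a_1})\to 0$ after checking its hypothesis directly with the supports computed above — namely that $(x_2,\dots,x_n)\not\subseteq Q$ for every $Q\in\Supp(\mathcal F'')$; this fails only for $Q=\mathfrak m$, so if $\mathfrak m\in\Supp(\mathcal F'')$ I peel $\mathfrak m$ off the top of $\mathcal F''$ first (it can be the topmost factor of $S/(I,x_1^{a_1})$ since it is the unique maximal member of that support), leaving a pretty clean filtration of a submodule whose support avoids $\mathfrak m$, to which Lemma~\ref{exact sequence} now applies cleanly, and then re-adjoin the $S/\mathfrak m$ factor at the very top of the resulting filtration of $S/I$ — legitimate because $\mathfrak m\in\Ass(S/I)$ and being maximal it causes no pretty-clean violation when placed last. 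This bookkeeping with $\mathfrak m$ is the only real subtlety; everything else follows from Corollary~\ref{initial}, Lemma~\ref{totalordered}, and the ideal-theoretic facts already proved in Proposition~\ref{assdepth0}.
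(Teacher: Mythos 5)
There is a genuine gap, and it lies exactly where you flagged the difficulty. You start from the short exact sequence $0\to S/(I:x_1^{a_1})\to S/I\to S/(I,x_1^{a_1})\to 0$, whose submodule contributes the prime $(x_2,\ldots,x_n)$ at the bottom of the filtration while the quotient contributes $\mathfrak m=(x_1,\ldots,x_n)$ higher up; you correctly observe (in your own words, earlier in the argument) that this is the wrong order, since for pretty cleanness a proper inclusion $P_i\subsetneq P_j$ forces $i>j$, so the bigger prime $\mathfrak m$ must occur at a \emph{smaller} index than $(x_2,\ldots,x_n)$. Your final ``resolution'' contradicts this: you claim $\mathfrak m$ ``can be the topmost factor of $S/(I,x_1^{a_1})$'' and that ``being maximal it causes no pretty-clean violation when placed last.'' Both statements are false. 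In any pretty clean filtration of $S/(I,x_1^{a_1})$ whose support contains primes other than $\mathfrak m$ (which it does, since $\Ass(S/(I,x_1^{a_1}))=\{(x_1,\ldots,x_j):j\in\supp(v)\cup\{n\}\}$ has several elements), $\mathfrak m$ cannot be the top factor: any other prime below it is properly contained in $\mathfrak m$, violating the defining condition. And placing the $S/\mathfrak m$ factor at the very top of the filtration of $S/I$, with $(x_2,\ldots,x_n)$ already at the bottom, produces precisely the forbidden pattern $P_i\subsetneq P_j$ with $i<j$. So the peeling-and-reattaching trick does not save the argument.

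The paper avoids all of this by starting from the \emph{other} short exact sequence, $0\to (I:x_1^{a_1})/I\to S/I\to S/(I:x_1^{a_1})\to 0$. Here the submodule $(I:x_1^{a_1})/I$ is annihilated by $x_1^{a_1}$, so every prime in $\Ass((I:x_1^{a_1})/I)=\{(x_1,\ldots,x_j):j\in\supp(v)\cup\{n\}\}$ contains $x_1$ (in particular $\mathfrak m$ sits here, at the bottom of the glued filtration, which is exactly where a maximal prime should be), while the quotient $S/(I:x_1^{a_1})$ has the single associated prime $(x_2,\ldots,x_n)$. No prime containing $x_1$ can be contained in $(x_2,\ldots,x_n)$, so the hypothesis of Lemma~\ref{exact sequence} holds verbatim, and both pieces are pretty clean by Lemma~\ref{totalordered}. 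Your instinct --- that you need the reversed order and that the sequence $0\to S/(I,x_1^{a_1})\to{?}\to S/(I:x_1^{a_1})\to 0$ is not available --- was correct; the missing observation is that $(I:x_1^{a_1})/I$ serves as the submodule carrying $\mathfrak m$ and is itself pretty clean, making the desired ordering available after all.
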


\begin{proof}
Let $u=x_1^{a_1}\dots x_n^{a_n}$ with $a_1>0$ and $x_q^{b_q}\dots x_n^{b_n}$ with $q\geq 2$ and $b_q>0$. We consider
the exact sequence of multigraded modules:
\begin{equation}\label{es}
0\longrightarrow (I:x_1^{a_1})/I\longrightarrow S/I\longrightarrow S/(I:x_1^{a_1})\longrightarrow 0.
\end{equation}
As $x_1^{a_1}\in \Ann_S((I:x_1^{a_1})/I)$, we get $x_1\in P$ for all $P\in \Ass((I:x_1^{a_1})/I)$. On the other hand,
as we already noticed in the proof of Proposition~\ref{assdepth0}, $\Ass(S/(I:x_1^{a_1}))=\{(x_2,\dots,x_n)\}.$
 By Proposition~\ref{assdepth0}, we have $$\Ass(S/I)=\{(x_1,\dots,x_j):j\in \supp(v)\cup \{n\}\}\cup
 \{(x_2,\dots,x_n)\},$$ which implies that
 $\Ass((I:x_1^{a_1})/I)=\{(x_1,\dots,x_j):j\in \supp(v) \cup \{n\}\}$, thus by Lemma~\ref{totalordered},
 $(I:x_1^{a_1})/I$ and $S/(I:x_1^{a_1})$ are pretty clean $S$-modules. Next we apply Lemma~\ref{exact sequence} and
 conclude that $S/I$ is pretty clean.
\end{proof}

\begin{Lemma}
\label{depth>0pretty}
Let $I$ be a lexsegment ideal  such that $\depth(S/I)>0$ and $x_1\nmid v.$ Then $S/I$ is pretty clean.
\end{Lemma}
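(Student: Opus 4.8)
The plan is to mimic the structure of the proof of Lemma~\ref{depth0pretty}, but now using the exact sequence
\[
0\longrightarrow S/(I:x_1)\longrightarrow S/I\longrightarrow S/(I,x_1)\longrightarrow 0,
\]
which is available because $\depth(S/I)>0$ forces $a_1=1$. Since $S/(I,x_1)\cong S'/(L^i(v))$ with $S'=K[x_2,\dots,x_n]$, Corollary~\ref{initial} (applied over $S'$, then viewed over $S$) gives a pretty clean filtration $\mathcal{F}''$ of $S/(I,x_1)$ whose support is $\{(x_1,\dots,x_j): j\in\supp(v)\cup\{n\}\}$. So the work is to produce a pretty clean filtration $\mathcal{F}'$ of $S/(I:x_1)$ and then check the comparability hypothesis of Lemma~\ref{exact sequence}: every $P\in\Supp(\mathcal{F}')$ and every $Q\in\Supp(\mathcal{F}'')$ must satisfy $P\not\subseteq Q$.

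First I would analyze $(I:x_1)$. As recorded in the proof of Proposition~\ref{assdepth1}, $(I:x_1)=J+L$, where $J=(L^f(u/x_1))$ is a final lexsegment ideal in $K[x_l,\dots,x_n]$ (generated in degree $d-1$) and $L=(L^i(v))\subseteq S'=K[x_2,\dots,x_n]$ is an initial lexsegment ideal (generated in degree $d$); moreover the variable sets $\{x_l,\dots,x_n\}$ and $\{x_2,\dots,x_n\}$ overlap only in $\{x_l,\dots,x_n\}$, and $\Ass(S/(I:x_1))$ is the explicit list computed in Proposition~\ref{assdepth1}. Inspecting that list: the primes are $(x_2,\dots,x_n)$ (when $\depth(S/I)=1$), together with $P_{j,l}$ and $P_{j,l+1}$ for suitable $j\in\supp(v)$. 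These are all of the form $(x_2,\dots,x_j,x_t,\dots,x_n)$ with $t\in\{l,l+1\}$, so each contains the block $\{x_l,\dots,x_n\}$ (in fact each contains $x_2$ as well since $q\geq 2$ and $2\in\supp(v)$ forces $j\geq 2$ in every term — here I would spell out the boundary cases). I would then show $\Ass(S/(I:x_1))$ is totally ordered by inclusion: fixing $t$, the $P_{j,t}$ are nested as $j$ ranges over $\supp(v)$; and comparing $t=l$ with $t=l+1$, one has $P_{j,l+1}\supseteq P_{j,l}$ while $P_{j,l}\supseteq P_{j',l+1}$ for appropriate $j\geq j'$, so the whole set forms a chain (with $(x_2,\dots,x_n)$ sitting at the top). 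By Lemma~\ref{totalordered}, $S/(I:x_1)$ is pretty clean; call its filtration $\mathcal{F}'$.

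It remains to verify the comparability condition: for $P\in\Supp(\mathcal{F}')=\Ass(S/(I:x_1))$ and $Q=(x_1,\dots,x_j)\in\Supp(\mathcal{F}'')$ we need $P\not\subseteq Q$. This holds because $x_n\in P$ (every associated prime of $I:x_1$ contains $x_n$, as they all contain the block $\{x_l,\dots,x_n\}$), whereas $x_n\notin Q=(x_1,\dots,x_j)$ for $j\neq n$; and if $j=n$ then $Q=\mathfrak{m}\supseteq P$, which is allowed. So in all cases $P\not\subseteq Q$, and Lemma~\ref{exact sequence} applies, giving a pretty clean filtration of $S/I$.

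The main obstacle I anticipate is the second step: proving that the list in Proposition~\ref{assdepth1} is genuinely totally ordered requires a careful case split on $\depth(S/I)=1$ versus $>1$, on $a_l<d-1$ versus $a_l=d-1$, and on the relative sizes of the indices $j$ appearing (recall the constraints $j\leq l-2$ for $P_{j,l}$ and $j\leq l-1$ for $P_{j,l+1}$, together with $l\geq q$). One must check that these index ranges are compatible with nesting — e.g. that $P_{j,l}\subseteq P_{j',l+1}$ exactly when $j\geq j'$, and that no two primes in the list are incomparable. If, contrary to expectation, the set fails to be a chain in some configuration, the fallback is to build $\mathcal{F}'$ directly from the explicit primary decomposition of $I:x_1$ written in the proof of Proposition~\ref{assdepth1} (the intersection of the $Q_i+Q_j'$), ordering the primary components by decreasing dimension of their radicals, which still yields a pretty clean filtration since larger primes must come first; this is the standard construction and does not require total order.
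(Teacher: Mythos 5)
There is a genuine gap — in fact two — and the root cause is your choice of exact sequence. Lemma~\ref{exact sequence} constructs the filtration of $M$ by putting the $M'$-factors at the \emph{lower} indices and the $M''$-factors at the \emph{higher} indices, so a proper inclusion $P\subsetneq Q$ with $P\in\Supp(\mathcal{F}')$ and $Q\in\Supp(\mathcal{F}'')$ destroys pretty cleanness; that is exactly why the hypothesis reads $P\not\subseteq Q$. In your setup $\mathcal{F}''$ is a filtration of $S/(I,x_1)\cong S'/(L^i(v))$, and by Proposition~\ref{assinitial} its support contains $(x_1,\dots,x_n)=\mathfrak{m}$ (the case $j=n$). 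On the other hand every $P\in\Supp(\mathcal{F}')=\Ass(S/(I:x_1))$ is strictly contained in $\mathfrak{m}$, since $x_1$ is regular on $S/(I:x_1)$ and hence $\mathfrak{m}\notin\Ass(S/(I:x_1))$. Thus for $Q=\mathfrak{m}$ you get $P\subsetneq Q$, which is precisely what the lemma forbids; the remark ``$Q=\mathfrak{m}\supseteq P$, which is allowed'' has the inclusion reversed. The paper avoids this by using the \emph{other} exact sequence,
\[
0\longrightarrow \frac{(I:x_1)}{I}\longrightarrow \frac{S}{I}\longrightarrow \frac{S}{(I:x_1)}\longrightarrow 0,
\]
for which the comparability condition becomes automatic: $x_1\in P$ for every $P\in\Ass\bigl((I:x_1)/I\bigr)$ (since $x_1$ annihilates this module) while $x_1\notin Q$ for every $Q\in\Ass\bigl(S/(I:x_1)\bigr)$.

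The second gap is your claim that $\Ass(S/(I:x_1))$ is totally ordered by inclusion. It is not, in general. From Proposition~\ref{assdepth1} the primes that occur are of the form $P_{j,l}=(x_2,\dots,x_j,x_l,\dots,x_n)$ and $P_{j,l+1}=(x_2,\dots,x_j,x_{l+1},\dots,x_n)$ with various $j\in\supp(v)$. Take for instance $j<j'$ both in $\supp(v)$ with $j'\leq l-1$ and compare $P_{j,l}$ with $P_{j',l+1}$: the first contains $x_l$ but not $x_{j+1},\dots,x_{j'}$, while the second contains $x_{j+1},\dots,x_{j'}$ but not $x_l$, so they are incomparable. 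Hence Lemma~\ref{totalordered} does not apply to $S/(I:x_1)$ directly. The fallback you suggest — ordering primary components by decreasing dimension — does not by itself produce a pretty clean filtration; not every monomial quotient is pretty clean, so primary decomposition alone cannot suffice. The paper handles this case ($a_l<d-1$ and $q\leq l-1$) by introducing the auxiliary ideal $M=(I:x_1):x_l^d$ and applying Lemma~\ref{exact sequence} once more to the sequence $0\to M/(I:x_1)\to S/(I:x_1)\to S/M\to 0$, where now $x_l$ separates the two sets of associated primes, each of which \emph{is} totally ordered. That extra layer of decomposition is the missing idea in your argument.
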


\begin{proof}
As we have seen before, since $\depth(S/I)>0,$ $u$ and $v$ have the following form: $u=x_1x_l^{a_l}\cdots x_n^{a_n}$
with $l\geq 2$ and
$a_l>0$, $v=x_q^{b_q}\cdots x_n^{b_n}$ with $q\geq 2$ and $b_q>0.$ Moreover, we have $l\geq q.$ As in the first part
of the proof of Lemma~
\ref{depth0pretty}, by using the exact sequence of multigraded $S$-modules
\begin{equation}
\label{seq2}
0\to \frac{(I:x_1)}{I}\to \frac{S}{I} \to \frac{S}{(I:x_1)} \to 0,
\end{equation}
it is enough to show that $(I:x_1)/I$ and $S/(I:x_1)$ are pretty clean and no prime ideal of the pretty clean
filtration of $(I:x_1)/I$ is
strictly contained in a prime ideal of the pretty clean filtration of $S/(I:x_1)$. We first observe that since $x_1\in
\Ann_S((I:x_1)/I)$, we have
$x_1\in P$ for all the prime ideals $P\in \Ass((I:x_1)/I)$. On the other hand, since $x_1$ is regular on $S/(I:x_1)$,
it follows that
$x_1\not\in P$ for all $P\in\Ass(S/(I:x_1)).$ By Proposition~\ref{assdepth1}, we get
$\Ass((I:x_1)/I)=\{(x_1,\ldots,x_j): j\in
\supp(v)\setminus\{n\}\},$ therefore $(I:x_1)/I$ is a pretty clean module since its associated primes are totally
ordered by inclusion.

If $u=x_1x_{l}^{d-1}$, it follows, by  Proposition~\ref{assdepth1}, that
$\Ass(S/(I:x_1))\subseteq\{(x_2,\ldots,x_j,x_l,\\\ldots,x_n): j\in
\supp(v)\}\cup \{(x_2,\ldots,x_n)\},$ thus it is totally ordered by inclusion, which shows that $S/(I:x_1)$ is pretty
clean. The same argument works if $u <_{\lex}x_1x_l^{d-1}$ and $q=l.$ In both
cases, it is clear that for all $P\in \Ass((I:x_1)/I)$ and $P^\prime\in \Ass(S/(I:x_1))$ we have $P\not\subseteq
P^\prime.$ We then may conclude that in
these cases $S/I$ is a pretty clean module.

It remains to consider $\deg_{x_l}(v)<d-1$ and $q\leq l-1.$ We are going to show that $S/(I:x_1)$ is pretty clean
which will end our proof.
Note that one may decompose $(I:x_1)$ as $(I:x_1)=J+L$ where $J$ is generated in degree $d-1$ by the final lexsegment
$L^f(u/x_1)\subseteq
K[x_l,\ldots,x_n]$, and $L$ is generated in degree $d$ by the initial lexsegment $L^i(v)\subseteq K[x_2,\ldots,x_n].$
Let
$(L^i(v))=\bigcap_{j\in \supp(v)\cup\{n\}} Q_j$ be the irredundant primary decomposition of $(L^i(v))$ where  $Q_j$
are monomial primary ideals with
$\sqrt{Q_j}=(x_2,\ldots,x_j), j\in
\supp(v)\cup\{n\}$. Let $M=(I:x_1):x_l^d=(J+(L^i(v))):x_l^d=
J:x_l^d + (L^i(v)):x_l^d.$  It is easily seen that $J:x_l^d $ is a monomial $(x_{l+1},\ldots,x_n)$-primary ideal. In
addition, we have
$(L^i(v)):x_l^d=(\bigcap_{j\in \supp(v)\cup\{n\}} Q_j): x_l^d=\bigcap_{j\in \supp(v)\cup\{n\}} (Q_j: x_l^d)=
(\bigcap_{j\in \supp(v)\atop j\leq l-1} (Q_j: x_l^d))\bigcap (\bigcap_{j\in \supp(v)\cup\{n\}\atop j\geq l} (Q_j:
x_l^d)).$ In the last intersection,
each of the primary monomial ideals contains a power of $x_l,$ therefore $Q_j: x_l^d=S$ for all $j\geq l.$ It follows
that
$(L^i(v)):x_l^d=\bigcap_{j\in \supp(v)\atop j\leq l-1} (Q_j: x_l^d).$ This implies that $M=\bigcap_{j\in \supp(v)\atop
j\leq l-1}
(J:x_l^d + Q_j: x_l^d)=\bigcap_{j\in \supp(v)\atop j\leq l-1} (J:x_l^d + Q_j)$ is an iredundant primary decomposition
of $M$ which
gives $\Ass(S/M)=\{(x_2,\ldots,x_j,x_{l+1},\ldots,x_n): j\in \supp(v), j\leq l-1\}$. It is clear that $M\supseteq
I:x_1,$ hence we have the exact
sequence of multigraded $S$-modules
\[
0 \to \frac{M}{(I:x_1)}\to \frac{S}{(I:x_1)}\to \frac{S}{M} \to 0.
\]
On the other hand, it is also clear that $x_l^{d}M\in I:x_1$, which implies that $x_l^d\in \Ann(M/(I:x_1)).$ In
particular, it follows that
$x_l\in P$ for all $P\in \Ass(M/(I:x_1))$. From the above sequence and by using the form of $\Ass(S/(I:x_1))$ we
finally get
$\Ass(M/(I:x_1))=\{(x_2,\ldots, x_j, x_l,\ldots,x_n): j\in \supp(v)\}$, hence $M/(I:x_1)$ is pretty clean. Moreover,
there is no proper
inclusion of the type $P\subseteq P^\prime$ where $P\in \Ass(M/(I:x_1))$ and $P^\prime\in \Ass(S/M),$ hence, by
Lemma~\ref{exact sequence}, $S/(I:x_1)$
is pretty clean.
\end{proof}

Theorem~\ref{pretty}  and Corollary 4.3. in \cite{HP} yield the following

\begin{Corollary}
\label{seqCM}
Let $I\subseteq S$ be a lexsegment ideal. Then $S/I$ is sequentially Cohen-Macaulay.
\end{Corollary}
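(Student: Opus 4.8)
The plan is to deduce the statement directly from Theorem~\ref{pretty} together with the machinery of pretty clean modules developed in \cite{HP}. By Theorem~\ref{pretty}, for a lexsegment ideal $I\subseteq S$ the quotient $S/I$ is a pretty clean $S$-module. The only remaining ingredient is the implication ``pretty clean $\implies$ sequentially Cohen-Macaulay'', which is exactly \cite[Corollary 4.3]{HP}. So the proof is essentially a one-line citation, and the work consists in checking that the hypotheses of that corollary are met in our multigraded setting.

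First I would recall precisely what \cite[Corollary 4.3]{HP} asserts: if $M$ is a pretty clean module over a Noetherian ring (in particular a finitely generated multigraded module over $S=K[x_1,\ldots,x_n]$), then $M$ is sequentially Cohen-Macaulay. The notion of pretty cleanness used in \cite{HP} is stated for modules with a prime filtration whose factors are of the form $S/P_i$; the multigraded version recalled in the Definition preceding Lemma~\ref{totalordered} is a special case, and a multigraded pretty clean filtration is in particular an (ungraded) pretty clean filtration in the sense of \cite{HP}. Hence $S/I$ is pretty clean as an $S$-module in the sense required by \cite[Corollary 4.3]{HP}.

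Applying that corollary to $M=S/I$ then gives immediately that $S/I$ is sequentially Cohen-Macaulay, which is the claim. I would phrase the proof as: ``By Theorem~\ref{pretty}, $S/I$ is a pretty clean $S$-module. The assertion now follows from \cite[Corollary 4.3]{HP}.''

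There is no real obstacle here; the content of Corollary~\ref{seqCM} is entirely carried by Theorem~\ref{pretty}, whose proof (via Corollary~\ref{initial}, Lemma~\ref{depth0pretty}, and Lemma~\ref{depth>0pretty}) does all the heavy lifting. The only point worth a moment's care is the compatibility of the multigraded framework of Section~\ref{prettysection} with the (not necessarily graded) setting of \cite{HP}, but this is routine since every multigraded prime filtration is a filtration in the ordinary sense and the associated primes $P_i$ are genuine prime ideals of $S$.
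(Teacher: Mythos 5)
Your proposal is correct and matches the paper's own derivation exactly: the paper introduces Corollary~\ref{seqCM} with the line ``Theorem~\ref{pretty} and Corollary 4.3.\ in \cite{HP} yield the following,'' which is precisely the two-step citation you give. Your additional remarks about the compatibility of the multigraded setting with \cite{HP} are sound but are not spelled out in the paper either, so there is no divergence in substance.
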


Moreover, from Theorem \ref{pretty} and  \cite[Theorem 6.5.]{HP} we get the following

\begin{Corollary}
\label{Stanley}
Let $I\subseteq S$ be a lexsegment ideal. Then $S/I$ satisfies the Stanley conjecture, that is we have the
inequality
$\sdepth(S/I)\geq \depth(S/I),$ where $\sdepth(S/I)$ is the Stanley depth of $S/I$.
\end{Corollary}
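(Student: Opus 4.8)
The plan is to deduce the statement immediately from the pretty cleanness of $S/I$, established in Theorem~\ref{pretty}, together with the general structure theory of pretty clean modules developed in \cite{HP}. In other words, there is essentially nothing new to do here: all the difficulty was already absorbed into the proof of Theorem~\ref{pretty} (and, before that, into the determination of $\Ass(S/I)$ in Section~\ref{asssection}), so the present corollary is a direct invocation of \cite[Theorem 6.5]{HP} with $M=S/I$.

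In more detail, I would proceed in three steps. First, by Theorem~\ref{pretty} the module $S/I$ admits a multigraded pretty clean filtration $\mathcal{F}\colon 0=M_0\subseteq M_1\subseteq\cdots\subseteq M_r=S/I$ with $M_i/M_{i-1}\cong(S/P_i)(-a_i)$ for monomial primes $P_i$; choosing a homogeneous generator of each factor and lifting produces from $\mathcal{F}$ a Stanley decomposition $S/I=\bigoplus_{i=1}^{r}\bar u_i\,K[Z_i]$ with $Z_i=\{x_j : x_j\notin P_i\}$, so that $\sdepth(S/I)\geq\min_i|Z_i|=\min_i\dim S/P_i$. Second, by \cite[Corollary 3.4]{HP} the primes occurring in $\mathcal{F}$ are exactly the associated primes, $\{P_1,\dots,P_r\}=\Supp(\mathcal{F})=\Ass(S/I)$, hence $\min_i\dim S/P_i=\min\{\dim S/P : P\in\Ass(S/I)\}$. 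Third, I would use the fact, valid for any pretty clean module (for an arbitrary finitely generated module only ``$\leq$'' holds, and pretty cleanness — equivalently the sequential Cohen--Macaulayness noted in Corollary~\ref{seqCM} — upgrades it to equality), that $\depth(S/I)$ equals this same minimum. Combining the three observations yields $\sdepth(S/I)\geq\depth(S/I)$.

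I do not expect any obstacle: the argument is a citation once Theorem~\ref{pretty} is available. The only point requiring a word of care is that all modules, filtrations and Stanley decompositions above are taken in the finitely generated multigraded category over $S$, which is exactly the framework in which the relevant results of \cite{HP} are stated (or, as remarked at the start of Section~\ref{prettysection}, in which they have been transcribed), so the cited statements apply verbatim.
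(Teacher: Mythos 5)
Your proposal is correct and follows the same route as the paper: the paper simply cites Theorem~\ref{pretty} together with \cite[Theorem 6.5]{HP}, and your three steps are precisely an unpacking of that cited theorem of Herzog and Popescu (pretty clean filtration $\Rightarrow$ Stanley decomposition with $\sdepth \geq \min_P \dim S/P$ over $P\in\Supp(\mathcal F)=\Ass(S/I)$, which equals $\depth$ for pretty clean modules). The only small slip is the parenthetical suggestion that pretty cleanness is ``equivalent'' to sequential Cohen--Macaulayness --- the first implies the second, but the converse is not a general theorem --- though this aside does not affect the validity of your argument, since Theorem~\ref{pretty} hands you pretty cleanness directly.
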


\end{document}